\newtheorem{theo+}              {Theorem}           [section]
\newtheorem{prop+}  [theo+]     {Proposition}
\newtheorem{coro+}  [theo+]     {Corollary}
\newtheorem{lemm+}  [theo+]     {Lemma}
\newtheorem{exam+}  [theo+]     {Example}
\newtheorem{rema+}  [theo+]     {Remark}
\newtheorem{defi+}  [theo+]     {Definition}
\newenvironment{theorem}{\begin{theo+}}{\end{theo+}}
\newenvironment{proposition}{\begin{prop+}}{\end{prop+}}
\newenvironment{corollary}{\begin{coro+}}{\end{coro+}}
\newenvironment{lemma}{\begin{lemm+}}{\end{lemm+}}
\theoremstyle{plain} \theoremstyle{remark}
\newtheorem{remark}{Remark}
\def \r{\mbox{${\mathbb R}$}}
\def\E{/\kern-1.0em \equiv }
\title{}
\author{Ye-Lin Ou$^{*}$ }
\address{Department of Mathematics,\newline\indent
Texas A $\&$ M University-Commerce,\newline\indent Commerce, TX
75429, U S A.\newline\indent E-mail:yelin.ou@tamuc.edu}
\thanks{$^{*}$ This work was supported by a grant from the Simons Foundation ($\#427231$, Ye-Lin Ou).}
\begin{document}

\title[Conformal biharmonic and $k$-polyharmonic maps ]{Some classifications of  conformal biharmonic and  $k$-polyharmonic maps}

\subjclass{58E20} \keywords{Biharmonic maps, $k$-polyharmonic
maps, M\"obius transformations, conformal biharmonic maps, conformal $k$-polyharmonic
maps.}
\thanks{}
\date{07/21/21}

\maketitle
\section*{Abstract}
\begin{quote}  
We give a complete classification of  local and global conformal biharmonic maps between any two space forms by proving that a  conformal map  between two space forms is proper biharmonic  if and only if the dimension is $4$, the domain is flat, and  it is a restriction of a M\"obius transformation.
 We also show that proper $k$-polyharmonic conformal maps between Euclidean spaces exist if and only if the dimension is $2k$ and they are precisely the restrictions of  M\"obius transformations. This provides infinitely many simple examples of proper $k$-polyharmonic maps with nice geometric structure.
{\footnotesize  }
\end{quote}
\section{Introduction}

Biharmonic maps are a generalization of harmonic maps which include harmonic functions, geodesics, minimal isometric immersions (i.e., minimal submanifols), and Riemannian submersions with minimal fibers as special cases. Biharmonic map equation is a system of 4-th order nonlinear partial differential equations. Although no general theory of the existence of solutions to such a difficult system is available, there are many interesting results on the study of biharmonic maps with additional geometric constraints. For a more detailed background on biharmonic maps,  including basic examples and properties of biharmonic maps, some progress on biharmonic submanifolds (i.e., biharmonic isometric immersions), biharmonic conformal maps, biharmonic maps into spheres, biharmonic maps with symmetry, Liouville type and unique continuation theorems for biharmonic maps, we refer the reader to the recent book \cite{OC}.

The following are some known facts about conformal biharmonic maps between space forms of the same dimension.

\begin{itemize}
\item[(A)] Baird-Kamissoko \cite{BK} : The inversion in sphere, $\phi:\r^{m}\to \r^{m}, \phi(x)=\frac{x}{|x|^2}$, is biharmonic if and only if $m=4$.
\item[(B)] Loubeau-Ou \cite{LO}: (i) The conformal map given by the identity map $1: (B^m, \delta_{ij})\to (B^m, \frac{4\delta_{\alpha\beta}}{(1-|x|^2)^2})\equiv H^m$ is a proper biharmonic map if and only if $m=4$. (ii) The conformal map given by the identity map $1: (\r^m, \delta_{ij})\to (\r^m, \frac{4\delta_{\alpha\beta}}{(1+|x|^2)^2})\equiv S^m\setminus\{N\}$ is a proper biharmonic map if and only if $m=4$.  
\item[(C)] Montaldo-Oniciuc-Ratto \cite{MOR} Gives a classification of rotationally symmetric biharmonic conformal  maps between space forms viewed as warped product model spaces.
\item[(D)] Baird-Ou \cite{BO}: (i) Every conformal map $\phi: \r^4\to \r^4$ with  $\phi(x)= b+\frac{kA(x-a)}{|x-a|^{\epsilon}},\; a, b\in \r^m,\; A\in O(m), \epsilon\in \{0, 2\}$, is biharmonic. For the case $\epsilon=0$, it is also harmonic whilst for the case $\epsilon=2$, every map is proper biharmonic.
(ii) Every conformal map $\phi: \r^4\to S^4\setminus\{N\}\equiv (\r^4, \frac{4\delta_{\alpha\beta}}{(1+|x|^2)^2})$ with  $\phi(x)= b+\frac{kA(x-a)}{|x-a|^{\epsilon}},\; a, b\in \r^m,\; A\in O(m), \epsilon\in \{0, 2\}$, is a proper biharmonic map. (iii) There is no proper biharmonic map among the M\"obius transformations $S^4\to S^4$.
\item[(E)] Ou-Chen \cite{OC} (Corollary 11.12): Proper biharmonic conformal maps between domains of $4$-dimensional space forms exist only in the cases: $\r^4\supset U\to M^4(c)$.
\end{itemize}

In this note, we give a complete classification of conformal biharmonic maps between any two space forms by  proving the following

\begin{theorem}\label{MT}
A  conformal map $M^m(c_1)\supseteq U \to M^m(c_2)$ between two space forms is proper biharmonic  if and only if 
\begin{itemize}
\item[(i)] the dimension is $m=4$, 
\item[(ii)] the domain is flat (i.e., $c_1=0$), and 
\item[(iii)]  it is a restriction of a M\"obius transformation
$\phi: \r^4\to (\r^4,h)$, $\phi(x)= b+\frac{kA(x-a)}{|x-a|^{\epsilon}},\; a, b\in \r^4,\; A\in O(4)$, 
 into a $4$-dimensional space form, where $\epsilon =2$ for $c_2=0$ and $\epsilon\in \{0, 2\}$ for $c_2\ne 0$.
 \end{itemize}
\end{theorem}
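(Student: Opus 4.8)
The plan is to reduce the classification to an explicit computation of the bitension field of a Möbius transformation between two conformally flat metrics, and then to read off the constraints $m=4$ and $c_1=0$ from that computation. First I would use that every space form is conformally flat to represent the domain and target, locally, as open subsets of $\r^m$ carrying metrics $g_i=\rho_i^2\delta$ $(i=1,2)$, where $\delta$ is the Euclidean metric and $\rho_i$ is the standard (constant, stereographic, or Poincaré) conformal factor of $M^m(c_i)$; for instance $\rho=1$ when $c=0$ and $\rho=2/(1-|x|^2)$ for the hyperbolic case. A conformal map $\phi$ between the space forms is then the same data as a map $\phi$ of Euclidean domains that is conformal for $\delta$, and for $m\ge3$ Liouville's theorem forces this Euclidean map to be the restriction of a Möbius transformation, which can be put in the stated normal form $\phi(x)=b+kA(x-a)/|x-a|^{\epsilon}$ with $\epsilon\in\{0,2\}$. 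The low dimensions are disposed of at once: for $m=2$ the tension field of any conformal map vanishes, so every conformal map is harmonic and hence never \emph{proper} biharmonic, and $m=1$ is trivial.

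For $m\ge3$ I would compute the tension and bitension fields of $\phi$ in the conformal-factor formalism. The total dilation is $\lambda=\rho_1^{-1}\,\mu\,(\rho_2\circ\phi)$, where $\mu$ is the Euclidean dilation of $\phi$, and the tension field of an equidimensional conformal map is $\tau(\phi)=(2-m)\,d\phi(\operatorname{grad}\ln\lambda)$. Substituting this into the standard bitension formula $\tau_2(\phi)=-J^{\phi}(\tau(\phi))$, with $J^\phi$ the Jacobi operator of $\phi$, produces an expression in the covariant derivatives of $\ln\lambda$ and the target curvature $c_2$. Since every ingredient is explicit, the proper-biharmonic system $\tau_2(\phi)=0$, $\tau(\phi)\ne0$ collapses to an algebraic obstruction that is a nonzero multiple of $(m-4)$; isolating this factor---exactly the mechanism behind facts (A) and (B), where the inversion and the two identity maps turn out biharmonic precisely when $m=4$---forces $m=4$.

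Granting $m=4$, the remaining two conditions come from pinning down $\rho_1$ and the exponent $\epsilon$. To force $c_1=0$ I would feed the dimension-four computation a nonconstant domain factor $\rho_1$ (the case $c_1\ne0$) and show that the domain curvature contributes a term to $\tau_2(\phi)$ that cannot be cancelled by any choice of the Möbius data $a,b,k,A,\epsilon$; this reproduces and extends Corollary 11.12, i.e.\ fact (E). The cleanest route is to use that biharmonicity is preserved under pre- and post-composition with isometries of the space forms, normalizing $\phi$ so that only the standard inversion (and the identity maps of (A), (B)) must be computed before the obstruction is applied. The exponent is then read off the tension field: when $c_2=0$ and $c_1=0$ the choice $\epsilon=0$ makes $\lambda$ constant, so $\phi$ is harmonic and not proper, leaving only $\epsilon=2$, whereas for $c_2\ne0$ the nonconstant $\rho_2\circ\phi$ keeps $\tau(\phi)\ne0$ for both $\epsilon\in\{0,2\}$. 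For the sufficiency direction I would verify directly that the maps in (iii) satisfy $\tau_2(\phi)=0$ with $\tau(\phi)\ne0$: for $c_2=0$ this is fact (D)(i), for the spherical target it is (D)(ii), and for the hyperbolic target it is the entirely parallel computation with $\rho_2=2/(1-|x|^2)$.

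The step I expect to be the main obstacle is the bitension computation in the genuinely mixed case, where both $\rho_1$ and $\rho_2$ are nonconstant and $\phi$ is a general Möbius transformation rather than a normalized inversion: tracking how the two conformal factors interact inside the higher-order operator $J^\phi$, and proving that the $(m-4)$ coefficient and the $c_1$-obstruction term are \emph{genuinely} nonzero---so that no accidental cancellation revives solutions in dimensions $m\ne4$ or over curved domains---is the delicate part. I would try to tame it by maximal use of the composition invariances above, so that the load-bearing computations are only those of the inversion and the identity maps already recorded in (A) and (B).
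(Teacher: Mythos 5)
Your framework coincides with the paper's: reduce to Möbius normal form via Liouville's theorem (the paper's Lemma \ref{Le1}), express everything through the conformal factor $\lambda$ and the bitension equation for equidimensional conformal maps (the paper's Lemma \ref{Le2}, built from the equation of \cite{BFO} combined with the curvature identity for $\Delta\lambda$), and dispose of $m=2$ by harmonicity. However, the actual content of the paper's proof is six case-by-case verifications (Propositions \ref{RR}--\ref{SH}), one for each pair of curvature signs, and your proposal defers exactly those on the strength of two claims that do not survive scrutiny. First, the assertion that the proper-biharmonicity obstruction ``collapses to a nonzero multiple of $(m-4)$'' is only true when the domain is flat (there the paper indeed gets $-\tfrac{2}{k}(m-4)\lambda^2\nabla\lambda=0$ or $-16c^2(m-4)(c^2-|x-d|^2)=0$). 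For a curved domain the obstruction is a system of polynomial identities in $x$ whose coefficient equations must be shown inconsistent by a different mechanism: e.g.\ for $S^m\supseteq U\to S^m$ with $d=0$ the paper arrives at $-2c^2-(m-4)=0$ together with $(m-4)c^4-4(m-3)c^2+m-4=0$, which jointly force $m=4$ and then contradict $c\ne 0$; in the $H^m\to\r^m$ case the contradiction is $-2|x|^4-(m-4)|x|^2=0$ for all $x$, or a pair of coefficient equations whose only common root is $m=2$. None of these is a clean $(m-4)$ factor, and proving ``no accidental cancellation'' is precisely the work you have postponed.

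Second, your proposed shortcut --- normalize by pre- and post-composition with isometries so that only the inversion and the identity maps of facts (A) and (B) need to be computed --- cannot reach the curved-domain cases, which are the bulk of the theorem (they are what forces $c_1=0$). Facts (A), (B), (D) only record computations for maps \emph{out of} flat domains ($\r^m\to\r^m$, $\r^m\to S^m$, $\r^m\to H^m$), so the six configurations with domain $S^m$ or $H^m$ require fresh computations; moreover, even after exhausting the isometry groups, essential continuous moduli remain (the parameters $c$ and $d$ in the paper's conformal factors $\lambda=\tfrac{c(1\pm|x|^2)}{\pm c^2+|x-d|^2}$), and the contradiction equations depend on them, so one cannot reduce to a single normalized map. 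A smaller omission: to pass from the local statement (on an open set $U$ where Liouville's normal form is valid) to the classification as stated, the paper invokes the unique continuation theorem for biharmonic maps \cite{BOn}; your outline does not address this step. In summary, the proposal is a sound plan that matches the paper's strategy at the level of Lemmas \ref{Le1} and \ref{Le2}, but it leaves unproved --- and in part mispredicts the structure of --- the case analysis that constitutes the proof.
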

 We also show that proper $k$-polyharmonic conformal maps between Euclidean spaces exist if and only if the dimension is $2k$ and they are precisely the restrictions of  M\"obius transformations. This gives infinitely many simple examples of proper $k$-polyharmonic maps with very nice geometric structure.

\section{Proof of the main theorem}
First of all, thanks to the unique continuation theorem  for biharmonic maps (see \cite{BOn}) which states that two biharmonic maps are the same if they agree on an open subset of the domain, it is enough to prove the classification theorem in an open subset of the domain.

The proof of the main theorem rests on the following two lemmas. The first one  gives the local expressions of conformal maps between space forms which allow us to compute the conformal factors that satisfy the biharmonic PDEs.  

Recall that a conformally flat space is a Riemannian manifold $(M, g)$ which is locally Euclidean, i.e., every point of $M$ is contained in an open neighborhood $U$ such that $g_U=\lambda^2g_E$, where $g_E$ is the Euclidean metric.

\begin{lemma}\label{Le1}
For $m\ge 3$, any conformal map $\phi: (M^m,g)\to (N^m,h)$ between conformally flat spaces can locally be described as  $\phi: M^m\supseteq U\to \phi(U)\subseteq\r^m$ ($m\ge 3$) with $ \phi(x)= b+\frac{kA(x-a)}{|x-a|^{\epsilon}},\; a, b\in \r^m,\; A\in O(m)$
\end{lemma}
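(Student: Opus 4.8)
The plan is to reduce the statement to the classical Liouville theorem on conformal mappings of Euclidean space and then put the resulting M\"obius transformation into the stated normal form. First I would exploit conformal flatness to pass to the Euclidean setting. Since $m\ge 3$ and both spaces are conformally flat, each point of the domain has a neighborhood $U$ on which $g_U=\lambda^2 g_E$, and its image lies in a chart where $h=\mu^2 g_E$, with $g_E$ the Euclidean metric and $\lambda,\mu>0$. Writing the conformality of $\phi$ as $\phi^\ast h=\sigma^2 g$ and substituting these expressions, a direct computation gives $\phi^\ast g_E=\rho^2 g_E$ with $\rho=\sigma\lambda/(\mu\circ\phi)>0$. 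Thus in these charts $\phi$ is a conformal diffeomorphism between two Euclidean domains, i.e. $\sum_k\partial_i\phi^k\,\partial_j\phi^k=\rho^2\delta_{ij}$.

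The heart of the argument is then to show that such a $\phi$ is a M\"obius transformation; this is exactly Liouville's theorem, and I would prove it via the flatness constraint. Because $g_E$ is flat and $\phi$ is a diffeomorphism, the pulled-back metric $\phi^\ast g_E=\rho^2 g_E$ is again flat. For $m\ge 3$ the vanishing of the Ricci curvature of a conformal rescaling $\rho^2 g_E$ of a flat metric, taken in its trace-free part, is equivalent to the statement that the Euclidean Hessian of $v:=1/\rho$ is pure trace:
\[
\partial_i\partial_j v=f\,\delta_{ij}
\]
for some function $f$. Using the symmetry of third partial derivatives together with $m\ge 3$ forces $f$ to be a constant $c$, whence $v=\tfrac{c}{2}|x|^2+\langle\beta,x\rangle+\gamma$ is a quadratic polynomial; completing the square, $1/\rho=\tfrac{c}{2}|x-a|^2+d$.

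It remains to integrate the conformality equations to recover $\phi$ explicitly. When $c=0$ the factor $\rho$ is constant, the conformality condition $\partial_i\phi^k\,\partial_j\phi^k=\rho^2\delta_{ij}$ says $D\phi=\rho A$ with $A\in O(m)$ constant, and integration yields the similarity $\phi(x)=b+kA(x-a)$, i.e. the case $\epsilon=0$. When $c\neq 0$, $\rho$ is, up to constants, the conformal factor of the inversion centered at $a$, and composing $\phi$ with that inversion reduces it to the previous constant-factor case; unwinding the composition gives $\phi(x)=b+kA(x-a)/|x-a|^{2}$, the case $\epsilon=2$. The main obstacle is the Liouville rigidity step: establishing that conformality together with flatness pins $1/\rho$ down to a quadratic polynomial, since this is precisely where the hypothesis $m\ge 3$ is essential and where, if one wished to dispense with smoothness assumptions, the low-regularity subtleties of Liouville's theorem would require the Gehring--Reshetnyak machinery.
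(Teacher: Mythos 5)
Your proposal follows essentially the same route as the paper's proof: both use the conformal flatness of domain and target to rewrite $\phi$ locally as a conformal map between Euclidean domains with composite conformal factor $\sigma\lambda/(\mu\circ\phi)$, and then appeal to Liouville's theorem to get the M\"obius normal form; the only difference is that the paper cites Liouville (Baird--Wood, Proposition 2.3.14) where you sketch its classical proof. That sketch has one hole at the integration step: the vanishing of the trace-free Ricci curvature of $\rho^2 g_E$ only yields $v=1/\rho=\tfrac{c}{2}|x|^2+\langle\beta,x\rangle+\gamma$, and to conclude that $v$ is genuinely constant when $c=0$ (rather than merely affine) and that the additive constant $d$ in $\tfrac{c}{2}|x-a|^2+d$ vanishes when $c\neq 0$, you must also use the vanishing of the scalar curvature of $\rho^2 g_E$, i.e.\ the undifferentiated first-order constraint $2v\,\Delta v=m|\nabla v|^2$, which your argument never invokes. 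With that one line added your derivation closes and reproduces exactly what the paper obtains by citation.
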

\begin{proof}
Let $\phi: (M^m,g)\to (N^m,h)$ be a conformal map between two conformally flat spaces with conformal factor $\lambda$, then for any point $p\in M$ there exist neighborhood $U$ of $p$ and $V$ of $\phi(p)$ such that its local expression $\tilde{\phi}: (\r^m, \sigma^2\delta_{ij}) \supset U\to (\r^m, \rho^2 \delta_{\alpha, \beta})$ is a conformal map with the conformal factor $\lambda$. It is easily checked that the map $\tilde{\phi}: (\r^m, \delta_{ij}) \supset U\to (\r^m, \delta_{\alpha,\beta})$ between Euclidean spaces is a conformal map with the conformal factor $\sigma \lambda\, \rho^{-1} (\tilde{\phi})$. By the well-known Liouville theorem for conformal map (see e.g., \cite{BW}, Proposition 2.3.14), we conclude that the map has to be in the form $ \tilde{\phi}(x)= b+\frac{kA(x-a)}{|x-a|^{\epsilon}},\; a, b\in \r^m,\; A\in O(m)$.
\end{proof}

The following lemma gives some necessary conditions for a proper biharmonic conformal map between space forms, which are crucial in the proofs of the classification theorems.

\begin{lemma}\label{Le2}
 If a conformal map  $\phi: (M^m(c_1), g)\to (M^m(c_2), h)$  between two space forms with $m\ge 3$  with $\phi^{*}h=\lambda^2 g$ is biharmonic, then the  conformal factor $\lambda$ solves the PDEs
\begin{eqnarray}\label{ND}
2\nabla\,(\lambda\,\Delta \lambda) -4(\Delta \lambda)\nabla \lambda  +[2mc_2\lambda^2+(m-2) c_1]\,\lambda\nabla\lambda=0, \;{\rm and}\\\label{ND2}
 (m-4)\nabla |\nabla \lambda|^2  +[4\Delta \lambda+(2-3m)c_1\lambda+2mc_2\lambda^3]\,\nabla\lambda=0.
\end{eqnarray}
\end{lemma}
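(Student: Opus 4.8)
The plan is to compute the bitension field $\tau_2(\phi)$ explicitly and to read the two PDEs off from $\tau_2(\phi)=0$. Two structural facts about conformal maps will carry the whole computation. First, the tension field is $\tau(\phi)=(2-m)\,d\phi(\nabla u)$, where $u=\ln\lambda$; second, the second fundamental form of a conformal map is $(\nabla d\phi)(X,Y)=d\phi\big(X(u)\,Y+Y(u)\,X-g(X,Y)\,\nabla u\big)$, which one obtains cleanly by factoring $\phi$ through the isometry $(M^m,\lambda^2 g)\to(M^m(c_2),h)$ and invoking the conformal-change formula for the Levi-Civita connection. Writing $\omega=\nabla u$, the key simplification is that on the tension-field direction this collapses to $(\nabla d\phi)(X,\omega)=|\omega|^2\,d\phi(X)$ and $(\nabla d\phi)(X,X)=d\phi\big(2X(u)X-|X|^2\omega\big)$; these are what make the second covariant derivative manageable.

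Next I would assemble $\tau_2(\phi)=-\Delta^\phi\tau(\phi)-\operatorname{trace}_g R^{N}\big(d\phi,\tau(\phi)\big)d\phi$. The curvature term is the easy half: since $M^m(c_2)$ has constant curvature, $R^N(X,Y)Z=c_2\big(\langle Y,Z\rangle X-\langle X,Z\rangle Y\big)$, and tracing against a $g$-orthonormal frame (with $\langle d\phi(e_i),d\phi(e_j)\rangle_h=\lambda^2\delta_{ij}$) reduces it to a single scalar multiple of $\tau(\phi)$, hence of $\lambda^2\,d\phi(\omega)$, which supplies the $c_2$-terms. The laborious half is $\Delta^\phi\tau(\phi)=(2-m)\,\Delta^\phi\big(d\phi(\omega)\big)$: I would differentiate $d\phi(\omega)$ twice in the pullback connection via $\nabla^\phi_X d\phi(Y)=(\nabla d\phi)(X,Y)+d\phi(\nabla_XY)$, feeding in the formulas above, and take the $g$-trace. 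This produces $\nabla|\omega|^2$, $\nabla(\Delta u)$, $|\omega|^2\omega$ and $(\Delta u)\omega$ together with the rough Laplacian $\operatorname{trace}\nabla^2\omega$ of the gradient field $\omega$; the latter I would rewrite by the Bochner--Weitzenb\"ock identity $\operatorname{trace}\nabla^2(\nabla u)=\nabla(\Delta u)+\mathrm{Ric}(\nabla u)$, and this is precisely where the domain curvature enters, since $\mathrm{Ric}=(m-1)c_1\,g$ on $M^m(c_1)$ produces the $(m-1)c_1\,\omega$ contribution, i.e.\ the $c_1$-terms.

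Because $\phi$ is a conformal local diffeomorphism, $d\phi$ is a bundle isomorphism, so $\tau_2(\phi)=0$ is equivalent to the vanishing of one vector field on the domain, expressed through $\omega=\nabla\ln\lambda$. The last step is to pass back to $\lambda$ itself using $\omega=\lambda^{-1}\nabla\lambda$, $\Delta u=\lambda^{-1}\Delta\lambda-\lambda^{-2}|\nabla\lambda|^2$ and the matching expansions of $\nabla|\omega|^2$ and $\nabla(\Delta u)$, and then to regroup the resulting identity into the two stated gradient-type equations, with (\ref{ND}) gathering the $\nabla(\lambda\Delta\lambda)$ and $\Delta\lambda\,\nabla\lambda$ contributions and (\ref{ND2}) the $\nabla|\nabla\lambda|^2$ and $\Delta\lambda\,\nabla\lambda$ contributions, each carrying its own combination of $c_1,c_2$. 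I expect this final bookkeeping to be the main obstacle: the cancellations among the third-order terms are delicate, and pinning down the exact coefficients and signs on the two curvature contributions---the domain term $c_1$ coming through the Ricci step and the target term $c_2$ coming through $R^N$---is the most error-prone point, so I would cross-check the outcome against the known cases, e.g.\ the inversion $x\mapsto x/|x|^2$ and the identity map $1:(B^m,\delta)\to H^m$, both of which must force $m=4$. The unique continuation theorem quoted before the lemmas guarantees that it suffices to run this computation on any open set where $d\phi$ is nonsingular.
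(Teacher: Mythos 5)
Your plan has a genuine gap: it can only ever produce \emph{one} vector equation, whereas the lemma asserts \emph{two}. Computing $\tau_2(\phi)$ and using that $d\phi$ is a bundle isomorphism gives a single identity on the domain --- in the paper's notation, equation (\ref{SnD}),
\begin{equation*}
\nabla(\lambda\Delta\lambda)-4(\Delta\lambda)\nabla\lambda-\tfrac{m-4}{2}\nabla|\nabla\lambda|^2+2(m-1)c_1\lambda\nabla\lambda=0 ,
\end{equation*}
which is exactly what your Bochner/second-fundamental-form computation would reproduce (the paper simply quotes it from Baird--Fardoun--Ouakkas rather than rederiving it). But a single identity cannot be ``regrouped'' into the two independent identities (\ref{ND}) and (\ref{ND2}); their difference is a nontrivial relation that does not follow from biharmonicity alone. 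The missing ingredient is the Yamabe-type equation satisfied by the conformal factor of \emph{any} conformal map between same-dimensional manifolds (biharmonic or not), namely
\begin{equation*}
\Delta\lambda=\tfrac{1}{2(m-1)}\bigl(\lambda\,\mathrm{Scal}^M-\lambda^3\,\mathrm{Scal}^N\bigr)-\tfrac{m-4}{2\lambda}|\nabla\lambda|^2 ,
\end{equation*}
which comes from the transformation law of scalar curvature under the conformal change $g\mapsto\lambda^2 g$ (since $\phi:(M,\lambda^2g)\to(N,h)$ is a local isometry). Specializing to space forms this becomes (\ref{conf}); taking its gradient and adding it to, respectively subtracting it from, (\ref{SnD}) is precisely how the paper obtains (\ref{ND}) and (\ref{ND2}). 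Note in particular that the two stated PDEs each mix the $c_1$- and $c_2$-contributions in a way that only this combination explains: the bitension field alone contains no $c_2$-free third-order equation of the form (\ref{ND2}).

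Everything else in your outline is sound --- the formulas for $\tau(\phi)$ and $\nabla d\phi$ of a conformal map, the curvature trace, and the Bochner step that injects $\mathrm{Ric}=(m-1)c_1 g$ are all correct and would give you (\ref{SnD}) --- but without the scalar-curvature identity the proof of the lemma as stated cannot be completed.
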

\begin{proof}
A biharmonic map equation for conformal maps between manifolds of the same dimension was derived in \cite{BFO}, which can be written in the following form  (see \cite{BO}, also \cite{OC}, Corollary 11.11)
\begin{eqnarray}\label{SDL}
\lambda\,\nabla\,\Delta \lambda -3(\Delta \lambda)\nabla \lambda  -\frac{m-4}{2}\nabla |\nabla \lambda|^2+2\lambda {\rm Ric}^M(\nabla \lambda)=0,
\end{eqnarray}
where $\Delta$ and  $\nabla $ denote the Laplacian  and the gradient of the domain manifold $(M^m, g)$.

On the other hand, it is known (see e.g., \cite{BW}, Proposition 11.4.2, also \cite{OC}, Lemma 11.1) that the conformal factor of a conformal map $\phi: (M^m, g)\to (N^m, h)$ with $m\ge 2$  with $\phi^{*}h=\lambda^2 g$ satisfies the equation
 \begin{eqnarray}\label{CL}
\Delta \lambda=\frac{1}{2(m-1)}(\lambda\, {\rm Scal}^M-\lambda^3\, {\rm Scal}^N)-\frac{m-4}{2\lambda}|\nabla \lambda|^2,
\end{eqnarray}
where, ${\rm Scal}^M$ and ${\rm Scal}^N$ denote the scalar curvatures of $(M^m, g)$ and $(N^m,h)$ respectively. 

In particular, for a  conformal map $\phi :(M^m(c_1), g) \to (N^m(c_2), h)$ between space forms with $\phi^{*}h=\lambda^2g$ and $m\ge 3$, Equations (\ref{SDL}) and (\ref{CL}) reduce to 
\begin{eqnarray}\label{SnD}
& \nabla\,(\lambda\,\Delta \lambda) -4(\Delta \lambda)\nabla \lambda  -\frac{m-4}{2}\nabla |\nabla \lambda|^2+2(m-1)c_1\lambda \nabla \lambda=0,\; {\rm and}\\\label{conf}
& \lambda\Delta \lambda-\frac{m}{2}(c_1\lambda^2-c_2\lambda^4)+\frac{m-4}{2}|\nabla \lambda|^2=0
\end{eqnarray}
respectively.

Applying $\nabla$ to both sides of (\ref{conf}) and adding the results to (\ref{SnD})  gives the following necessary condition for a conformal biharmonic maps between two space forms.
\begin{eqnarray}\notag
2\nabla\,(\lambda\,\Delta \lambda) -4(\Delta \lambda)\nabla \lambda  +[2mc_2\lambda^2+(m-2) c_1]\,\lambda\nabla\lambda=0.
\end{eqnarray}
Applying $-\nabla$ to both sides of (\ref{conf}) and adding the results to (\ref{SnD})  gives another necessary condition for a conformal biharmonic maps between two space forms.
\begin{eqnarray}\notag
 (m-4)\nabla |\nabla \lambda|^2  +[4\Delta \lambda+(2-3m)c_1\lambda+2mc_2\lambda^3]\,\nabla\lambda=0.
\end{eqnarray}
Thus, we obtain the lemma.
\end{proof}

\begin{proposition}\label{RR}
A conformal map $\phi: \r^m\supseteq U\to \r^m$  is proper biharmonic if and only if $m=4$ and it is a restriction of  the M\"obius transformation $\phi: \r^4\to \r^4 $, $\phi(x)= b+\frac{kA(x-a)}{|x-a|^2},\; a, b\in \r^4,\; A\in O(4).
$\end{proposition}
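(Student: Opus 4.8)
The plan is to reduce everything to the flat case $c_1=c_2=0$ and then exploit the explicit form of the conformal factor. First I would invoke Lemma~\ref{Le1} (the Liouville theorem): since $m\ge 3$, the conformal map $\phi$ must locally take the form $\phi(x)=b+\frac{kA(x-a)}{|x-a|^{\epsilon}}$ with $\epsilon\in\{0,2\}$, and after translating the domain I may assume $a=0$. This splits the analysis into two cases according to the value of $\epsilon$, and in each case I would identify the conformal factor $\lambda$ explicitly.

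For $\epsilon=0$ the map is an affine similarity, so $\lambda\equiv k$ is constant; for $m\ge 3$ a conformal map is harmonic precisely when its conformal factor is constant (its tension field being a multiple of $d\phi(\nabla\ln\lambda)$), so $\phi$ is harmonic and hence not proper biharmonic. Therefore any proper biharmonic $\phi$ must have $\epsilon=2$, with conformal factor $\lambda=k|x|^{-2}$ (recorded by a direct computation of $\phi^{*}h$, or via the standard conformal factor of an inversion). Since this $\lambda$ is non-constant, the same fact shows such a $\phi$ is automatically non-harmonic, so for the remaining case ``biharmonic'' and ``proper biharmonic'' coincide, and it suffices to decide when $\phi$ is biharmonic.

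The heart of the argument is then to feed $\lambda=k|x|^{-2}$ into the necessary condition (\ref{ND}) of Lemma~\ref{Le2} specialized to $c_1=c_2=0$, namely $2\nabla(\lambda\,\Delta\lambda)-4(\Delta\lambda)\nabla\lambda=0$. Using the radial identities $\nabla(|x|^{-2})=-2|x|^{-4}x$ and $\Delta(|x|^{\alpha})=\alpha(\alpha+m-2)|x|^{\alpha-2}$ in $\r^m$ (so that $\Delta\lambda=-2k(m-4)|x|^{-4}$ and $\lambda\,\Delta\lambda=-2k^2(m-4)|x|^{-6}$), a short computation collapses the left-hand side to a single radial term proportional to $(m-4)\,|x|^{-8}x$. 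Requiring this to vanish on an open set forces $m=4$. This computation is the main, and essentially only, obstacle; it is routine but must be carried out carefully, since several competing $(m-4)$ factors and negative powers of $|x|$ appear and it is easy to mis-track a constant.

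Finally, for the converse direction I would confirm that when $m=4$ and $\epsilon=2$ the map is genuinely proper biharmonic. One can either verify directly that $\lambda=k|x|^{-2}$ solves the full conformal biharmonic equation (\ref{SDL}) with ${\rm Ric}^M=0$, observing that each of its three terms is a multiple of $(m-4)|x|^{-8}x$ and so vanishes at $m=4$ (with $\Delta\lambda\equiv 0$ there), or simply appeal to statement (D)(i) in the Introduction, which asserts that every such $\phi:\r^4\to\r^4$ with $\epsilon=2$ is proper biharmonic. Combining the two directions yields the claimed classification.
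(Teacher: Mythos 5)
Your proposal is correct and follows essentially the same route as the paper: reduce via the Liouville theorem to the two Möbius forms, dismiss the similarity case as harmonic, and substitute $\lambda=k|x-a|^{-2}$ into the specialization $\nabla(\lambda\Delta\lambda)-2(\Delta\lambda)\nabla\lambda=0$ of (\ref{ND}), which collapses to a nonzero multiple of $(m-4)\lambda^{2}\nabla\lambda$ and forces $m=4$, with the converse cited from Baird--Ou. The only (harmless) additions are your normalization $a=0$ and the explicit remark that a non-constant conformal factor already rules out harmonicity, so biharmonic and proper biharmonic coincide in the $\epsilon=2$ case.
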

\begin{proof}
First, we note that for $m=2$ any conformal map is harmonic so there is no proper biharmonic conformal map between $2$-dimensional space forms. For $m\ge 3$, it follows from the  well-known Liouville theorem that any conformal map $\phi: \r^m\supseteq U\to \r^m$  is a composition of similarities and the inversions in spheres. It follows that we can express the map as 
\begin{align}\label{homoth}
\phi(x)= b+kA(x-a),\; a, b\in \r^m,\; k\in \r\setminus\{0\}, \; A\in O(m),\; {\rm or}\\\label{Inverse}
\phi(x)= b+\frac{kA(x-a)}{|x-a|^2},\; a, b\in \r^m,\; A\in O(m).
\end{align}
It is easily seen that the map given in (\ref{homoth}) is harmonic. So we only need to check the biharmonicity of the maps given in (\ref{Inverse}). A straightforward computation gives the conformal factor $\lambda$  of $\phi$ as
\begin{equation}\label{LRR}
\lambda=\frac{k}{|x-a|^2}.
\end{equation}
A further computation yields
\begin{align}\notag
\Delta\lambda=&\frac{-2k(m-4)}{|x-a|^4}=-\frac{2}{k}(m-4)\lambda^2,\\\label{GD10}
\nabla(\lambda\Delta\lambda)=&-\frac{6}{k}(m-4)\lambda^2\nabla \lambda,\\\label{GD11}
-2(\Delta\lambda)\nabla \lambda=&\frac{4}{k}(m-4)\lambda^2\nabla \lambda.
\end{align}
Note that for conformal maps between Euclidean domains, Equation (\ref{ND}) reads 
\begin{align}\label{GD12}
\nabla(\lambda\Delta\lambda)-2(\Delta\lambda)\nabla \lambda=0.
\end{align}
Substituting (\ref{GD10}) and (\ref{GD11}) into (\ref{GD12}) we have
\begin{align}
-\frac{2}{k}(m-4)\lambda^2\nabla \lambda=0.
\end{align}
This equation has solution  only if $m=4$ since $\nabla \lambda\ne 0$. 

For $m=4$, it was proved in \cite{BO} that any M\"obius transformation $\r^4 \supseteq U\to \r^4$ defined by (\ref{homoth}) and (\ref{Inverse}) are biharmonic, and only the latter cases are proper biharmonic. Thus, we obtain the proposition.
\end{proof}

\begin{proposition}\label{RS}
A conformal map $\phi: \r^m\supseteq U\to S^m$  is proper biharmonic if and only if $m=4$ and  it is a restriction of  one of the M\"obius transformation  $\phi: \r^4 \to (\r^4, \frac{4\delta_{\alpha\beta}}{(1+|y|^2)^2})$ with $ \phi(x)= b+\frac{kA(x-a)}{|x-a|^{\epsilon}},\; a, b\in \r^4,\; A\in O(4), \epsilon\in \{0, 2\}.$
\end{proposition}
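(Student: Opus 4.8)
The plan is to run the same strategy as in Proposition \ref{RR}, but now keeping track of the nonzero target curvature. Throughout I model the target as $S^m\equiv(\r^m,h)$ with $h=\frac{4\delta_{\alpha\beta}}{(1+|y|^2)^2}$, so that $c_1=0$ and $c_2=1$. First I note that for $m=2$ every conformal map is harmonic, so no proper biharmonic example occurs and I may assume $m\ge 3$. By Lemma \ref{Le1} (equivalently, by the Liouville theorem as used in Proposition \ref{RR}), every conformal map $\phi:\r^m\supseteq U\to(\r^m,h)$ is the restriction of a M\"obius transformation $\phi(x)=b+\frac{kA(x-a)}{|x-a|^{\epsilon}}$ with $\epsilon\in\{0,2\}$; hence the shape asserted in the statement is automatic, and the real content is the dimension restriction $m=4$ together with the verification that all such maps are proper biharmonic when $m=4$.

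The next step is to compute the conformal factor and reduce it to a single convenient ansatz. Since $\phi^{*}h=\lambda^2\delta$ and $h=\frac{4}{(1+|y|^2)^2}\delta$, the factor factorizes as $\lambda(x)=\frac{2\mu(x)}{1+|\phi(x)|^2}$, where $\mu$ is the Euclidean conformal factor of the underlying M\"obius map ($\mu=|k|$ when $\epsilon=0$ and $\mu=|k|/|x-a|^2$ when $\epsilon=2$). A short computation shows that in both cases $\lambda$ takes the uniform shape $\lambda=\frac{C}{Q}$, with $C>0$ a constant and $Q(x)$ a positive quadratic polynomial: for $\epsilon=0$ one has $Q=1+|b+kA(x-a)|^2$, while for $\epsilon=2$ clearing the factor $|x-a|^2$ turns $1+|\phi|^2$ into the quadratic $Q=(1+|b|^2)|x-a|^2+2k\langle b,A(x-a)\rangle+k^2$. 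This reduction to a rational ansatz $\lambda=C/Q$ with $Q$ quadratic is the conceptual key that makes the curved target tractable.

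I would then feed this ansatz into the necessary conditions (\ref{ND}) and (\ref{ND2}) of Lemma \ref{Le2}, specialized to $c_1=0,\ c_2=1$. Writing $\lambda=C/Q$ gives $\nabla\lambda=-C\nabla Q/Q^2$, $\Delta Q=\text{const}$, and $\Delta\lambda=C(2|\nabla Q|^2-Q\,\Delta Q)/Q^3$, so every term becomes a rational function whose numerator is a polynomial in $x$ over a power of $Q$; in particular the curvature term $2mc_2\lambda^3\nabla\lambda=-2mC^4\nabla Q/Q^5$ is what raises the powers of $Q$ in the denominators. Before confronting the messiest case I would simplify by symmetry: precomposing with a Euclidean isometry of the domain and postcomposing with a rotation $y\mapsto By$, $B\in O(m)$, of $S^m$ both preserve proper biharmonicity, so I may normalize $a=0$ and rotate $b$ to a standard direction (in the rotationally symmetric subcase $b=0,\ \epsilon=2,\ k=1$ one recovers exactly the identity map of fact (B)(ii), with $\lambda=2/(1+|x|^2)$). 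Clearing denominators and comparing polynomial coefficients should then force the dimension, and I expect the obstruction to again be carried by a factor of $(m-4)$, so that the system is consistent with $\nabla\lambda\ne 0$ (properness) only when $m=4$. The hard part will be precisely this coefficient-matching: the curvature terms introduce higher powers of $Q$ than appear in the flat case of Proposition \ref{RR}, so the algebra requires a careful accounting of how $Q$, $\nabla Q$, and $\Delta Q$ combine, and the symmetry normalization above is what keeps it finite.

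Finally, for the converse (the case $m=4$) I would simply invoke the computation of Baird--Ou recorded as fact (D)(ii): every M\"obius transformation $\phi:\r^4\to S^4\setminus\{N\}\equiv(\r^4,\frac{4\delta_{\alpha\beta}}{(1+|y|^2)^2})$ of the stated form with $\epsilon\in\{0,2\}$ is proper biharmonic. Note the contrast with Proposition \ref{RR}: into the flat target the similarity case $\epsilon=0$ is harmonic, whereas into the curved sphere it becomes proper biharmonic, which is exactly why both values of $\epsilon$ appear in the statement. Combining the necessity $m=4$ from the coefficient analysis with this sufficiency yields the proposition.
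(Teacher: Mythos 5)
Your plan is essentially the paper's proof: same identification of $S^m\setminus\{N\}$ with $(\r^m,\tfrac{4\delta_{\alpha\beta}}{(1+|y|^2)^2})$, same reduction via Lemma \ref{Le1} to the M\"obius form, same observation that the conformal factor is a constant over a positive quadratic, same necessary condition from Lemma \ref{Le2}, and the same appeal to Baird--Ou for sufficiency at $m=4$. Your $Q=(1+|b|^2)|x-a|^2+2k\langle b,A(x-a)\rangle+k^2$ is, after completing the square, exactly the paper's $\lambda=\frac{2c}{c^2+|x-d|^2}$ with $c=\frac{k}{1+|b|^2}$ and $d=a-\frac{kA^tb}{1+|b|^2}$, so no symmetry normalization is actually needed.

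The one substantive shortfall is that the decisive step is only announced, not performed: you write that clearing denominators ``should'' force the dimension and that you ``expect'' an $(m-4)$ factor. That expectation is correct but it is the entire content of the necessity direction, so it has to be carried out. For the record, only equation (\ref{ND}) is needed (you do not need (\ref{ND2})): with $c_1=0$, $c_2=1$ it reads $\nabla(\lambda\Delta\lambda)-2(\Delta\lambda)\nabla\lambda+m\lambda^3\nabla\lambda=0$, and substituting $\lambda=\frac{2c}{c^2+|x-d|^2}$ together with
\begin{equation*}
\Delta\lambda=\frac{-4c\,[mc^2+(m-4)|x-d|^2]}{(c^2+|x-d|^2)^3},\qquad
\nabla(\lambda\Delta\lambda)=-2m\lambda^3\nabla\lambda-\frac{8c^2(m-4)(2c^2-6|x-d|^2)(x-d)}{(c^2+|x-d|^2)^5},
\end{equation*}
collapses the whole identity to $-16c^2(m-4)(c^2-|x-d|^2)=0$, which (since $c\ne0$ and the factor $c^2-|x-d|^2$ is not identically zero) forces $m=4$. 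With that computation supplied, your argument matches the paper's.
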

\begin{proof}
Identifying  $S^m\setminus\{N\}$ with $(\r^m, \rho^2\delta_{ij})$, where $\rho^2=\frac{4}{(1+|y|^2)^2}$, we use Lemma \ref{Le1} to conclude that in an open set $U$
the conformal map $\phi: \r^m\supseteq U\to S^m$ takes the forms given by (\ref{homoth}) or(\ref{Inverse}).

A straightforward computation gives the conformal factor $\lambda$  of $\phi$ as
\begin{align}\notag
\lambda=&\frac{2c}{c^2+|x-d|^2},\; {\rm where},\; c=\frac{k}{1+|b|^2}\ne 0,\; d=a-\frac{kA^tb}{1+|b|^2},\\\label{NL}
\nabla \lambda=&\frac{-4c(x-d)}{(c^2+|x-d|^2)^2},\\\label{DL}
\Delta\lambda=&
\frac{-4c[mc^2+(m-4)|x-d|^2]}{(c^2+|x-d|^2)^3},\\\notag
\lambda\Delta\lambda=&
-\frac{m}{2}\lambda^4-\frac{8c^2(m-4)|x-d|^2}{(c^2+|x-d|^2)^4}\\\label{NDL}
\nabla(\lambda\Delta\lambda)=&-2m\lambda^3\nabla \lambda-\frac{8c^2(m-4)(2c^2-6|x-d|^2)(x-d)}{(c^2+|x-d|^2)^5}.
\end{align}

When $ c_1=0, c_2=1$, the necessary condition  (\ref{ND})  for a conformal biharmonic map reads
\begin{eqnarray}\label{01}
\nabla\,(\lambda\,\Delta \lambda) -2(\Delta \lambda)\nabla \lambda  +m \lambda^3\nabla \lambda=0.
\end{eqnarray}
Substituting (\ref{NL}), (\ref{DL}) and (\ref{NDL}) into  (\ref{01}) we have
\begin{eqnarray}\notag
-m\lambda^3\nabla \lambda-\frac{8c^2(m-4)(2c^2-6|x-d|^2)(x-d)}{(c^2+|x-d|^2)^5}\\\notag-2\frac{-4c[mc^2+(m-4)|x-d|^2]}{(c^2+|x-d|^2)^3}\frac{-4c(x-d)}{(c^2+|x-d|^2)^2}=0
\end{eqnarray}
This is equivalent to
\begin{align}\notag
&32mc^4-8c^2(m-4)(2c^2-6|x-d|^2)-32c^2[mc^2+(m-4)|x-d|^2]=0,\; {\rm or}\\\notag
&-16c^2(m-4)(c^2-|x-d|^2)=0.
\end{align}
This equation has solution  only if $m=4$. 
For $m=4$, it was proved in \cite{BO} that any M\"obius transformation $\r^4 \supseteq U\to S^4$ defined by (\ref{homoth}) and (\ref{Inverse}) are proper biharmonic.  Thus, we obtain the proposition.
\end{proof}

\begin{proposition}\label{RH}
A conformal map $\phi: \r^m\supseteq U\to H^m$ ($m\ge 3$) is proper biharmonic if and only if $m=4$ and  it is a restriction of  a M\"obius transformation  $\phi: \r^4 \to (B^4, \frac{4\delta_{\alpha\beta}}{(1-|y|^2)^2})$ with $ \phi(x)= b+\frac{kA(x-a)}{|x-a|^{\epsilon}},\; a, b\in \r^4,\; A\in O(4), \epsilon\in \{0, 2\}.$
\end{proposition}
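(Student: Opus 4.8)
The plan is to follow verbatim the strategy of Proposition \ref{RS}, now realizing the target as the Poincar\'e ball model of hyperbolic space, so that the target is a space form of curvature $c_2=-1$ while the domain stays flat, $c_1=0$. First I would identify $H^m$ with $(B^m,\rho^2\delta_{ij})$, where $\rho^2=\frac{4}{(1-|y|^2)^2}$, and invoke Lemma \ref{Le1} to conclude that on a suitable open set the map $\phi$ must take one of the two M\"obius normal forms (\ref{homoth}) or (\ref{Inverse}), the Liouville rigidity forcing $\epsilon\in\{0,2\}$. Both forms must be examined, and, in contrast to the $\r^m\to\r^m$ situation of Proposition \ref{RR}, the similarity form is no longer automatically harmonic because the target is curved.

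The second step is to compute the conformal factor $\lambda$ of $\phi$ viewed as a map $(\r^m,\delta_{ij})\to(B^m,\rho^2\delta_{ij})$, namely $\lambda=\rho(\phi(x))\,\mu$, where $\mu$ is the Euclidean conformal factor of the underlying M\"obius map. Completing the square as in the derivation of (\ref{NL}), I expect to obtain $\lambda=\frac{2c}{|x-d|^2-c^2}$ for the inversion (with $c=\frac{k}{1-|b|^2}$ and $|b|<1$) and $\lambda=\frac{2c}{c^2-|x-d|^2}$ for the similarity (with $c=1/k$), for suitable $c\ne 0$ and $d\in\r^m$. The one structural difference from the spherical case is the sign: the denominator is now $|x-d|^2-c^2$ rather than $c^2+|x-d|^2$. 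Accordingly one restricts to the open set $U$ on which $\phi(U)\subset B^m$, equivalently on which this denominator keeps a fixed sign, so that $\lambda$ is smooth and positive there.

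Next I would compute $\nabla\lambda,\ \Delta\lambda,\ \lambda\Delta\lambda$ and $\nabla(\lambda\Delta\lambda)$ exactly as in (\ref{NL})--(\ref{NDL}) and substitute them into the necessary condition (\ref{ND}), which for $c_1=0,\ c_2=-1$ reads
\[
\nabla(\lambda\,\Delta\lambda)-2(\Delta\lambda)\nabla\lambda-m\lambda^3\nabla\lambda=0 .
\]
I anticipate that, after cancelling the common factor $\frac{c^2(x-d)}{(|x-d|^2-c^2)^5}$, the identity collapses to
\[
16(m-4)\,(|x-d|^2+c^2)=0 ,
\]
and the parallel computation for the similarity form yields the same equation up to an overall sign. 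Since $|x-d|^2+c^2>0$ strictly, this forces $m=4$, and indeed more directly than in the spherical case, where the analogous factor $c^2-|x-d|^2$ could vanish. For $m=4$ I would then invoke the result of \cite{BO} that every M\"obius transformation $\r^4\supseteq U\to H^4$ of the form (\ref{homoth}) or (\ref{Inverse}) is proper biharmonic, closing the equivalence.

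The main obstacle is computational bookkeeping rather than conceptual: producing the exact form of $\lambda$ by completing the square while tracking the dependence of $(c,d)$ on $(a,b,k,A)$ and the constraint $|b|<1$ in the inversion case, and then carrying $\nabla(\lambda\,\Delta\lambda)$ through without sign errors, given that the hyperbolic denominator $|x-d|^2-c^2$ can change sign and $\lambda$ blows up along $\partial B^m$. Keeping the similarity and inversion forms organized so that they both reduce to the single identity $(m-4)(|x-d|^2+c^2)=0$ is the one place where genuine care is required.
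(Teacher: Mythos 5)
Your proposal is correct and follows essentially the paper's own route: the paper likewise defers the necessity part to the computation of Proposition \ref{RS} carried out with $c_2=-1$, and your anticipated identity $16(m-4)(|x-d|^2+c^2)=0$ is exactly what that computation produces, with the strict positivity of $|x-d|^2+c^2$ forcing $m=4$. The only cosmetic difference is in the sufficiency step: rather than quoting \cite{BO} for the hyperbolic target (which is not explicitly among the cases listed there), the paper verifies directly that for $m=4$ the conformal factor satisfies $\Delta\lambda=2\lambda^3$, the biharmonic equation for conformal maps into $H^4$ --- a check that your own evaluation of (\ref{ND}) at $m=4$ already subsumes.
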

\begin{proof}
We omit the proof of the fact that a proper biharmonic conformal map $\phi: \r^m\supseteq U\to H^m$ ($m\ge 3$) exists only in the case of dimension $m=4$ which  is similar to  that of Proposition \ref{RS}. For the second statement, we check that the conformal factor of the conformal map $\phi: \r^4\supseteq U\to H^4\equiv (B^4, \frac{4\delta_{\alpha\beta}}{1-|y|^2)^2})$ with $\phi(x)= b+\frac{kA(x-a)}{|x-a|^{\epsilon}},\; a, b\in \r^4,\; A\in O(4), \epsilon\in \{0, 2\}$,  is given by
\begin{align}\notag
\lambda=&\frac{2c}{-c^2+|x-d|^2},\; {\rm where},\; c=\frac{k}{1-|b|^2}\ne 0,\; d=a+\frac{kA^tb}{1-|b|^2}.
\end{align}
A straightforward computation verifies that the conformal factor  satisfies
\begin{align}\notag
\Delta\lambda=2\lambda^3,
\end{align}
which is exactly the biharmonic equation (\cite{BO}) for a conformal map $\r^4\to H^4\equiv (B^4, \frac{4\delta_{\alpha\beta}}{1-|y|^2)^2})$.
\end{proof}

\begin{proposition}\label{HR}
(i) There exists no proper biharmonic  conformal map $\phi: H^m \supseteq U\to \r^m$;
(ii) There exists no proper biharmonic  conformal map $\phi: S^m \supseteq U\to \r^m$.
\end{proposition}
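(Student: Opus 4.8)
The plan is to reduce the biharmonicity of $\phi$ to a single scalar identity for its conformal factor $\lambda$ and then read off a transcendence obstruction, rather than to substitute the explicit $\lambda$ into (\ref{ND}). Since the target is flat we have $c_2=0$, while $c_1=-1$ in case (i) and $c_1=+1$ in case (ii); in either case $c_1\neq 0$, which is exactly what will be contradicted. For $m=2$ every conformal map is harmonic, so assume $m\geq 3$ and apply Lemma~\ref{Le1}, realizing the domain $M^m(c_1)$ as $(\Omega,\sigma^2\delta)$ with $\sigma=\frac{2}{1+c_1|x|^2}$ and writing $\phi(x)=b+\frac{kA(x-a)}{|x-a|^{\epsilon}}$, $\epsilon\in\{0,2\}$. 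A short computation then gives the conformal factor as the nonconstant rational function $\lambda=\frac{k(1+c_1|x|^2)}{2|x-a|^{\epsilon}}$; the only features I will actually use are that $\lambda$ and $|\nabla\lambda|^2=\sigma^{-2}|\nabla_E\lambda|_E^2$ are rational in $x$ and that $\nabla\lambda\neq 0$ on a dense open set.

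First I would combine the two necessary conditions of Lemma~\ref{Le2}. Setting $c_2=0$ in (\ref{conf}) gives $\Delta\lambda=\frac{m}{2}c_1\lambda-\frac{m-4}{2\lambda}|\nabla\lambda|^2$, and substituting this into (\ref{ND2}) eliminates the Laplacian, leaving the first-order identity
\begin{equation}\label{star}
(m-4)\nabla|\nabla\lambda|^2=(m-2)\,c_1\lambda\nabla\lambda+\frac{2(m-4)}{\lambda}|\nabla\lambda|^2\nabla\lambda .
\end{equation}
When $m=4$ this collapses to $2c_1\lambda\nabla\lambda=0$; as $\lambda>0$ and $\nabla\lambda\neq 0$ somewhere, it forces $c_1=0$, a contradiction. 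Hence no proper biharmonic conformal map into a flat space form exists in dimension four.

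For $m\neq 4$ the decisive observation is that (\ref{star}) is exact. Dividing by $(m-4)\lambda^2$ and using $\nabla(\lambda^{-2}|\nabla\lambda|^2)=\lambda^{-2}\nabla|\nabla\lambda|^2-2\lambda^{-3}|\nabla\lambda|^2\nabla\lambda$, the cubic term $\lambda^{-3}|\nabla\lambda|^2\nabla\lambda$ cancels and I obtain
\begin{equation}\notag
\nabla\bigl(\lambda^{-2}|\nabla\lambda|^2\bigr)=\frac{m-2}{m-4}\,c_1\,\nabla(\ln\lambda).
\end{equation}
Integrating on a connected open set where $\nabla\lambda\neq 0$ yields $\lambda^{-2}|\nabla\lambda|^2=\frac{m-2}{m-4}c_1\ln\lambda+C$. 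The left-hand side is rational in $x$, whereas $\ln\lambda$ is not a rational function (the logarithm of a nonconstant rational function has logarithmic singularities). Since $m\geq 3$ gives $m-2\neq 0$, the coefficient $\frac{m-2}{m-4}c_1$ must vanish, so again $c_1=0$, contradicting $c_1=\pm 1$. This settles both (i) and (ii).

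I expect the main difficulty to be presentational rather than conceptual. The naive route---plugging the explicit $\lambda$ into (\ref{ND}) or (\ref{ND2}) as in Proposition~\ref{RS}---forces one to compute $\nabla$, $\Delta$ and $\nabla|\nabla\lambda|^2$ with respect to the curved domain metric $g=\sigma^2\delta$, and the inversion case $\epsilon=2$, which carries both the center $a$ and the factor $\sigma$, is genuinely unpleasant. The integrating-factor reduction of (\ref{star}) is what sidesteps this, so the care that remains is in justifying the reduction: that $\nabla\lambda\neq 0$ on a dense open set, that one may divide by $\lambda$ (which is positive), and that rationality of $\lambda$ and $|\nabla\lambda|^2$---preserved under the conformal rescaling $|\nabla\lambda|^2=\sigma^{-2}|\nabla_E\lambda|_E^2$---genuinely excludes the logarithm.
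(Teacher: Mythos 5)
Your proposal is correct, and it takes a genuinely different route from the paper. The paper proves the proposition by brute force: it writes the explicit conformal factor $\lambda=\frac{k(1\mp|x|^2)}{2|x-a|^2}$, computes $\bar\nabla\lambda$, $|\bar\nabla\lambda|^2$, $\bar\Delta\lambda$ and $\nabla|\bar\nabla\lambda|^2$ in the curved domain metric, substitutes into (\ref{ND2}), and obtains a vector identity $P_1(x)x+P_2(x)a=0$, which it then kills by comparing polynomial coefficients separately in the cases $a=0$ and $a\ne 0$ (choosing $x\perp a$ in the latter). Your reduction --- eliminating $\Delta\lambda$ between (\ref{conf}) and (\ref{ND2}), recognizing the resulting first-order identity as exact after division by $(m-4)\lambda^2$, integrating to $\lambda^{-2}|\nabla\lambda|^2=\frac{m-2}{m-4}\,c_1\ln\lambda+C$, and invoking the transcendence of the logarithm of a nonconstant rational function --- sidesteps all of that computation and treats (i) and (ii) uniformly, using only that $c_2=0$, that $\lambda$ is a nonconstant rational function, and that $|\bar\nabla\lambda|^2=\sigma^{-2}|\nabla_E\lambda|^2_E$ is rational. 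The algebra checks: with $c_2=0$ one gets $4\Delta\lambda+(2-3m)c_1\lambda=(2-m)c_1\lambda-\frac{2(m-4)}{\lambda}|\nabla\lambda|^2$, and the cubic term cancels exactly against $-2\lambda^{-3}|\nabla\lambda|^2\nabla\lambda$. Your argument in fact yields slightly more --- there is no biharmonic conformal map at all in these cases --- which is consistent, since a harmonic conformal map in dimension $m\ge 3$ would force $\lambda$ constant and hence a local homothety between spaces of different curvature.

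Two small points to tighten. First, the justification ``the logarithm of a nonconstant rational function has logarithmic singularities'' is imprecise over $\r$: a positive rational function such as $(1+|x|^2)/(2+|x|^2)$ has no real singularities. The clean argument is to differentiate the putative identity $\ln\lambda=R$ along a generic line: $\lambda'/\lambda$ has simple poles with nonzero integer residues at the complex zeros and poles of $\lambda$, whereas $R'$, being the derivative of a rational function, has all residues zero; hence $\lambda$ is constant along generic lines, a contradiction. Second, your sentence in the $m=4$ step, ``no proper biharmonic conformal map into a flat space form exists in dimension four,'' should say ``from a non-flat space form into a flat one''; as stated it would contradict Proposition \ref{RR}.
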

\begin{proof}
Use the model  $(B^m, \sigma^2\delta_{ij})$, $\sigma^2=\frac{4}{(1-|x|^2)^2}$ for the domain space $H^m$. By the unique continuation theorem for biharmonic maps, it is enough to prove the proposition in an open subset. By Lemma \ref{Le1},  $\phi: H^m\supseteq U\to \r^m$ ($m\ge 3$)  can be expressed as 
\begin{align}\label{h}
\phi(x)= b+\frac{kA(x-a)}{|x-a|^{\epsilon}},\; a, b\in \r^m,\; A\in O(m), \epsilon\in \{0, 2\}.
\end{align}
Let $\bar g=\sigma^2 g$ with $\sigma= \frac{2}{1-|x|^2}$ and $g$ denoting the standard Euclidean metric.  Let $\bar \Delta$ and $\bar \nabla$ denote the Laplacian and the gradient operators taken with respect to $\bar g$ respectively, and  $ \Delta$ and $ \nabla$ be the corresponding operators with respect to the Euclidean metric. Then, one can check that  the conformal factor of the map $\phi: H^m\supseteq U\to \r^m$ is given by
\begin{align}\notag
\lambda=\frac{k(1-|x|^2)}{2|x-a|^2}=\frac{k}{2}(1-|x|^2)f^{-1},\;  
\end{align}
 where,  $f=|x-a|^2$ and $k={\rm constant}\ne 0$.\\
 
A further computation yields
\begin{align}\notag
&\bar \nabla \lambda=-k\sigma^{-2}f^{-2} [ (f+(1-|x|^2))x-(1-|x|^2)\,a\,],\\\notag
&|\bar \nabla \lambda|^2=k^2\sigma^{-2}f^{-4}\{(1+f-|x|^2)^2 |x|^2+(1-|x|^2)^2|a|^2\\\notag
& -2 \langle x, a\rangle (1-|x|^2)(1+f-|x|^2)\},\\\label{HRL}
&\bar \Delta \lambda=-k\sigma^{-1}f^{-3}\{\sigma^{-1}[ mf^2-[( 4+m)|x|^2-4\langle x, a\rangle-m]f\\\notag 
&-4(1-|x|^2)|x-a|^2]\\\notag
&+(m-2)f\,[(1+f-|x|^2)|x|^2-(1-|x|^2)\langle x, a\rangle]\},\\\label{HRN}
&\nabla |\bar \nabla \lambda|^2=k^2\sigma^{-1}f^{-5}\,x\;\{(-2 f -8\sigma^{-1})[1+f-|x|^2)^2 |x|^2+(1-|x|^2)^2|a|^2\\\notag
&-2 \langle x, a\rangle (1-|x|^2)(1+f-|x|^2)]+\sigma^{-1}f\,[ 2(1+f-|x|^2)^2\\\notag
&-4|a|^2(1-|x|^2)+4(1+f-|x|^2)\langle x, a\rangle ]\,\}\\\notag
&+k^2\sigma^{-1}f^{-5}\,a\, \{8\sigma^{-1} [1+f-|x|^2)^2 |x|^2+(1-|x|^2)^2|a|^2\\\notag
&-2 \langle x, a\rangle (1-|x|^2)(1+f-|x|^2)]+\sigma^{-1}f\,[-4(1+f-|x|^2)|x|^2 \\\notag
&-2(1-|x|^2)(1+f-|x|^2) +4(1-|x|^2)\langle x, a\rangle]\,\}.
\end{align}

Note that for $c_1= -1, c_2=0$, Equation (\ref{ND2}) can be written as
\begin{eqnarray}\label{HR2}
 (m-4)\nabla |\bar\nabla \lambda|^2  +[4\bar \Delta \lambda-(2-3m)\lambda]\,\nabla\lambda=0.
\end{eqnarray}

Substituting (\ref{HRL}) and (\ref{HRN}) into (\ref{HR2}) we have
\begin{align}\label{GD28}
P_1(x)x+P_2(x)a=0
\end{align}
for any $x$, where $P_1(x)$ and $P_2(x)$ are two polynomials in $x$.

Case 1: $a=0$. In this case,  Equation (\ref{GD28}) implies that  $P_1(x)=0$ which leads to a polynomial equation
\begin{align}
&(m-4)\{(-2 |x|^2 -8\sigma^{-1}) |x|^2+2\sigma^{-1}|x|^2\, \,\}\\\notag
+& 4\sigma^{-1}[ m|x|^4-(( 4+m)|x|^2-m)|x|^2-4(1-|x|^2)|x|^2]\\\notag
&+4(m-2)\,|x|^4+(2-3m)|x|^4 =0.
\end{align}
A further calculation yields
\begin{align}
&-2 |x|^4 -(m-4)|x|^2 =0,\;\forall\; x,
\end{align}
which has no solution.

Case 2: $a\ne 0$. In this case, we choose any $x\in \r^m$ with $\langle x, a\rangle=0$, then (\ref{GD28}) implies that $P_2(x)=0, \;\forall\; x\,\bot\, a$.

Using this, together with (\ref{HRL}), (\ref{HRN}), (\ref{HR2}), and a straightforward computation, we obtain 

\begin{align}
&(m-4)\, \{4 [A^2 |x|^2+(1-|x|^2)^2|a|^2]+(|x|^2+|a|^2)\,(-A-A|x|^2) ]\,\}\\\notag
&- 2(1-|x|^2)[ m(|x|^2+|a|^2)^2-(( 4+m)|x|^2-m)(|x|^2+|a|^2)\\\notag
&-4(1-|x|^2)(|x|^2+|a|^2)]\\\notag
&-4(m-2)A(|x|^2+|a|^2)\,|x|^2-(2-3m)(|x|^2+|a|^2)^{2}=0
\end{align}
where $A=1+|a|^2$.

By computing the coefficients of the constant term and the $|x|^2$ term, we have
\begin{align}\label{CC}
&(m-4) +2|a|^2=0,\\\label{2C}
&(m-4)+(m-4)|a|^4+4|a|^2=0.
\end{align}
The only possible solution of (\ref{CC}) and (\ref{2C}) is $m=2$. However, it is well known that in the case of $m=2$, any conformal map between $2$-dimensional manifolds is harmonic. Thus, we obtain the first statement of the proposition.\\

Using a similar way working with conformal factor $\lambda=\frac{k(1+|x|^2)}{2|x-a|^2}$ we obtain the second statement.
\end{proof}

\begin{proposition}\label{SS}
(i) There exists no proper biharmonic  conformal map $\phi: S^m \supseteq U\to S^m$; (ii) There exists no proper biharmonic  conformal map $\phi: H^m \supseteq U\to H^m$
\end{proposition}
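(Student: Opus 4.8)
The plan is to establish (i) and (ii) in parallel, adapting the method of Proposition \ref{HR} to the situation in which the target, as well as the domain, is curved. Realize both copies of $S^m$ (resp.\ $H^m$) by the conformal models $(\r^m,\rho^2\delta_{ij})$ with $\rho=\frac{2}{1+|x|^2}$ (resp.\ $(B^m,\sigma^2\delta_{ij})$ with $\sigma=\frac{2}{1-|x|^2}$). By Lemma \ref{Le1} the map is a restriction of a M\"obius transformation, and dividing the target conformal factor already obtained in Proposition \ref{RS} (resp.\ \ref{RH}) by the domain factor $\rho$ (resp.\ $\sigma$) shows that, with respect to the curved metrics on both sides, the conformal factor of $\phi$ is
\begin{align}\notag
&\lambda=\frac{c(1+|x|^2)}{c^2+|x-d|^2}\quad\text{in case (i)},\\\notag
&\lambda=\frac{c(1-|x|^2)}{|x-d|^2-c^2}\quad\text{in case (ii)},
\end{align}
where $c\ne 0$ and $d$ are the parameters recorded in those propositions (both forms (\ref{homoth}) and (\ref{Inverse}) lead to this shape). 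Note that $\lambda$ is constant precisely when $\phi$ is an isometry, so properness is equivalent to $\lambda$ being non-constant.

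Since the conformal constraint (\ref{conf}) is an identity satisfied by the factor of every conformal map, biharmonicity is equivalent to the single condition of Lemma \ref{Le2}; I would use (\ref{ND}) with $c_1=c_2=1$ in case (i) and (\ref{ND2}) with $c_1=c_2=-1$ in case (ii). To make these explicit I would pass from the intrinsic operators of the curved domain to the Euclidean ones through the conformal-change identities $|\bar\nabla\lambda|^2=\rho^{-2}|\nabla\lambda|^2$ and $\bar\Delta\lambda=\rho^{-2}\bigl(\Delta\lambda+(m-2)\rho^{-1}\langle\nabla\rho,\nabla\lambda\rangle\bigr)$, together with their $\sigma$-analogues, after which every term becomes an explicit rational function of $x$. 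Because $\nabla\lambda$ lies in the span of $x$ and $d$, substituting the explicit $\lambda$ and clearing denominators reduces the vector equation to the form $P_1(x)\,x+P_2(x)\,d=0$, exactly as in the identity (\ref{GD28}) of Proposition \ref{HR}, where $P_1$ and $P_2$ are polynomials whose coefficients are polynomials in $m$.

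The conclusion then follows from the same case analysis. When $d=0$ the equation forces $P_1\equiv 0$; comparing the coefficients of the powers of $|x|^2$ gives relations in $m$ and $c$ whose only solution for $m\ge 3$ is the one making $\lambda$ constant, i.e.\ an isometry. When $d\ne 0$, restricting to the vectors $x\perp d$ isolates $P_2$, and matching the constant term with the $|x|^2$ term yields a system (the analogue of (\ref{CC})--(\ref{2C})) whose only solution is $m=2$. Hence for every $m\ge 3$ the necessary condition forces $\lambda$ to be constant, so no proper biharmonic conformal map $S^m\supseteq U\to S^m$ or $H^m\supseteq U\to H^m$ exists; and for $m=2$ every conformal map is harmonic. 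The dimension $m=4$ needs no separate treatment, since keeping $m$ as a parameter in the coefficient relations excludes it together with all $m\ge 3$, in agreement with fact (D)(iii) of the introduction.

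The conceptual skeleton is identical to that of Proposition \ref{HR}, so the genuine difficulty is algebraic. Both metrics are now curved and $\lambda$ is a quotient of quadratics carrying the extra constant $\pm c^2$ in its denominator, so $\bar\Delta\lambda$ and $\nabla|\bar\nabla\lambda|^2$ are considerably longer than the expressions (\ref{HRL})--(\ref{HRN}) encountered there, and the new curvature terms $2mc_2\lambda^2$ and $2mc_2\lambda^3$ in the necessary conditions must be carried through. The main obstacle is thus to assemble the identity $P_1(x)\,x+P_2(x)\,d=0$ without error and in a form whose coefficients factor transparently enough that the comparison cleanly forces $\lambda$ to be constant or $m=2$; taking $d$ rather than $a$ as the free translation parameter, and matching the chosen necessary condition to the sign of the curvature, is what keeps the computation within reach.
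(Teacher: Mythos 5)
Your plan coincides with the paper's proof in all structural respects: the same conformal models, the same use of Lemma \ref{Le1} to reduce to M\"obius transformations, the same conformal factor $\lambda=\frac{c(1+|x|^2)}{c^2+|x-d|^2}$ (resp.\ $\frac{c(1-|x|^2)}{-c^2+|x-d|^2}$), the same passage to Euclidean operators, the same reduction to an identity $P_1(x)\,x+P_2(x)\,d=0$, and the same case split on $d=0$ versus $d\ne 0$ with restriction to $x\perp d$. (The paper happens to use the necessary condition (\ref{ND2}) for the sphere-to-sphere case as well, rather than (\ref{ND}); that choice is inessential, but it is the one whose coefficients are actually worked out.)

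The gap is in your endgame. You predict that the coefficient comparison in the $d\ne 0$ case is ``the analogue of (\ref{CC})--(\ref{2C})'' and that its only solution is $m=2$. That is not what happens: here one must compare the constant, $|x|^2$ \emph{and} $|x|^4$ coefficients, and the resulting system (\ref{D9})--(\ref{D11}) does \emph{not} force $m=2$ or constant $\lambda$ --- it has the residual solution $A:=c^2+|d|^2=1$, which is perfectly compatible with $m\ge 3$ and with $\lambda$ non-constant. Ruling this branch out requires a genuinely separate argument: one observes that $A=1$ makes $\nabla|\bar\nabla\lambda|^2$ vanish at points with $\langle x,d\rangle=0$, feeds this back into the full expression (\ref{GD30}) to extract the further polynomial identity (\ref{GD33}), whose constant term gives $8|d|^2-2A+2A^2=0$ and hence $d=0$, contradicting $d\ne 0$. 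Similarly, in the $d=0$ case the comparison yields (\ref{CT})--(\ref{2T}), which only become contradictory after dividing by the factor $c^2-1$ (justified by properness, since $\nabla\lambda=(c^2-1)x$) and combining the two relations to force $m=4$ against $-2c^2-(m-4)=0$. So the coefficient comparison does not ``cleanly force $\lambda$ constant or $m=2$'' as you assert; without the extra step handling $A=1$ your argument does not close, and you should not expect the hyperbolic case (ii) to be any kinder.
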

\begin{proof}
 Identifying  the domain sphere $S^m\setminus\{N\}$ with $(\r^m, \sigma^2\delta_{ij})$, $\sigma^2=\frac{4}{(1+|x|^2)^2}$, and the target sphere with $(\r^m, \frac{4\delta_{\alpha\beta}}{(1+|y|^2)^2})$. By the unique continuation theorem for biharmonic maps, it is enough to prove the proposition for biharmonic conformal maps $\phi:  (\r^m, \frac{4\delta_{ij}}{(1+|x|^2)^2})\supset U\to(\r^m, \frac{4\delta_{\alpha\beta}}{(1+|y|^2)^2})$. By Lemma \ref{Le1},  $\phi: \r^m\supseteq U\to \r^m$ ($m\ge 3$)  can be expressed as 
\begin{align}\label{h}
\phi(x)= b+\frac{kA(x-a)}{|x-a|^{\epsilon}},\; a, b\in \r^m,\; A\in O(m), \epsilon\in \{0, 2\}.
\end{align}
Let $\bar g=\sigma^2 g$ with $\sigma= \frac{2}{1+|x|^2}$ and $g$ denoting the standard Euclidean metric.  Let $\bar \Delta$ and $\bar \nabla$ denote the Laplacian and the gradient operators taken with respect to $\bar g$ respectively, and  $ \Delta$ and $ \nabla$ be the corresponding operators with respect to the Euclidean metric. Then, one can check that  the conformal factor of the map between spheres is given by
\begin{align}\notag
\lambda=\frac{c(1+|x|^2)}{c^2+|x-d|^2},\; {\rm where},\; c=\frac{k}{1+|b|^2}\ne 0, \;d=a-\frac{kA^tb}{1+|b|^2}. 
\end{align}

Denoting $F=c^2+|x-d|^2$, a further computation yields
\begin{align}\notag
\bar \nabla \lambda=&2c\sigma^{-2}F^{-2} [ (F-(1+|x|^2))x+(1+|x|^2)\,d\,],\\\notag
|\bar \nabla \lambda|^2=&4c^2\sigma^{-2}\,[F^{-2} |x|^2+(1+|x|^2)^2F^{-4}(|x|^2+|d|^2\\\notag
&-2 \langle x, d\rangle)-2(1+|x|^2)F^{-3}(|x|^2-\langle x, d\rangle)],\\\notag
\bar \Delta \lambda=&\sigma^{-2}\Delta \lambda+(m-2)\sigma^{-3}\langle \nabla \sigma, \nabla \lambda\rangle\\\notag
=&2c\sigma^{-2}F^{-3}\{ mF^2-[( 4+m)|x|^2-4\langle x, d\rangle+m]F+4(1+|x|^2)|x-d|^2\}\\\notag
&-2c(m-2)\sigma^{-1}F^{-3}\{ |x|^2F^2-(1+|x|^2)(|x|^2-\langle x, d\rangle) F\}.
\\\label{GD30}
&(m-4)\nabla |\bar \nabla \lambda|^2=4(m-4)c^2F^{-5}\,x\\\notag
&\{2\sigma^{-1}\,[ |x|^2F^{3}+(1+|x|^2)^2|x-d|^2F-2(1+|x|^2)(|x|^2-\langle x, d\rangle)F^2]\\\notag
&+\sigma^{-2}[2F^3-4|x|^2F^2+4(1+|x|^2)|x-d|^2F-8(1+|x|^2)^2|x-d|^2\\\notag &+2(1+|x|^2)^2F
-4F^2(|x|^2-\langle x, d\rangle) -4(1+|x|^2)F^2\\\notag &+12(1+|x|^2)F(|x|^2-\langle x, d\rangle)]\}\\\notag
&+4(m-4)c^2\sigma^{-2}F^{-5}d\;[4|x|^2F^2
+8(1+|x|^2)^2|x-d|^2-2(1+|x|^2)^2F\\\notag
&-12(1+|x|^2)F(|x|^2-\langle x, d\rangle)+2(1+|x|^2)F^2].
\\\label{GD31}
 &[4\bar\Delta \lambda+(2-3m)\lambda+2m\lambda^3]  \nabla\lambda=2c^2F^{-5} [ c^2+|d|^2-1-2\langle x,d\rangle\,]x\\\notag
&\{8\sigma^{-2}[ mF^2-[( 4+m)|x|^2-4\langle x, d\rangle+m]F+4(1+|x|^2)|x-d|^2]\\\notag
&-8(m-2)\sigma^{-1}[ |x|^2F^2-(1+|x|^2)(|x|^2-\langle x, d\rangle) F]\\\notag
&+[(2-3m)(1+|x|^2)F^2+2mc^2 (1+|x|^2)^3]\}\;+2c^2F^{-5}  (1+|x|^2)\,d\\\notag
&\{8\sigma^{-2}[ mF^2-(( 4+m)|x|^2-4\langle x, d\rangle+m)F+4(1+|x|^2)|x-d|^2]\\\notag
&-8(m-2)\sigma^{-1}[ |x|^2F^2-(1+|x|^2)(|x|^2-\langle x, d\rangle) F]\\\notag
&+(2-3m)(1+|x|^2)F^2+2mc^2 (1+|x|^2)^3\}.
\end{align}

Substituting (\ref{GD30}) and (\ref{GD31}) into (\ref{ND2}) we have
\begin{align}\label{GD32}
P_1(x)x+P_2(x)d=0
\end{align}
for any $x$, where $P_1(x)$ and $P_2(x)$ denote two polynomials in $x$.

Case 1: $d=0$. In this case,  Equation (\ref{GD32}) implies that  $P_1(x)=0$, which leads to a polynomial identity
\begin{align}\notag
&(m-4)\{ 2 |x|^2F^{3}+2(1+|x|^2)^2|x|^2F-4(1+|x|^2)|x|^2F^2\\\notag
&+ (1+|x|^2)[F^3-2|x|^2F^2+2(1+|x|^2)|x|^2F\\\notag
&-4(1+|x|^2)^2|x|^2+(1+|x|^2)^2F-2F^2|x|^2+6(1+|x|^2)F|x|^2-2(1+|x|^2)F^2]\}\\\notag
&+ ( c^2-1)\{2(1+|x|^2)[ mF^2-(( 4+m)|x|^2+m)F+4(1+|x|^2)|x|^2]\\\notag
&-4(m-2) [ |x|^2F^2-(1+|x|^2)|x|^2 F]+(2-3m)F^2+2mc^2 (1+|x|^2)^2\}=0.
\end{align}
By computing the constant term and the $|x|^2$ term we obtain, respectively,
\begin{align}\label{CT}
&c^2(c^2-1)[-2c^2-(m-4)]=0\\\label{2T}
&(c^2-1)[(m-4)c^4-4(m-3)c^2+m-4]=0.
\end{align}
Noting that $c\ne 0$ and $c^2-1\ne 0$ since $\nabla \lambda= (c^2-1)x\ne 0$, Equations (\ref{CT}) and (\ref{2T}) can be simplified as
\begin{align}\label{S1}
&-2c^2-(m-4)=0,\\\label{S2}
&(m-4)c^4-4(m-3)c^2+m-4=0.
\end{align}
Substituting (\ref{S1}) into (\ref{S2}) we have
\begin{align}
(m-4)[c^4+2(m-3)+1]=0,
\end{align}
which implies that for $m\ge 3$, the only possible solution is $m=4$. However, Equation (\ref{S1}) implies that  $m=4$ leads to a contradiction as $c\ne 0$.
Therefore, the proposition is proved in the case of $d=0$.

Case 2:  $d\ne 0$. In this case, we choose any $x\in \r^m$ with $\langle x, d\rangle=0$, then (\ref{GD32}) implies that $P_2(x)=0, \;\forall\; x\,\bot\, d$.

Using this, together with (\ref{GD30}), (\ref{GD31}), (\ref{ND2}), and a straightforward computation we obtain 
\begin{align}\notag
&(m-4)\;[2|x|^2(A+|x|^2)^2+4(1+|x|^2)^2 (|x|^2+|d|^2)-(1+|x|^2)^2(A+|x|^2)\\\notag
&-6(1+|x|^2)(A+|x|^2)|x|^2+(1+|x|^2)(A+|x|^2)^2]\\\notag
&+2(1+|x|^2)[ m(A+|x|^2)^2-(( 4+m)|x|^2+m)(A+|x|^2)+4(1+|x|^2) (|x|^2+|d|^2)]\\\notag
&-4(m-2)[ |x|^2(A+|x|^2)^2-(1+|x|^2)|x|^2 (A+|x|^2)]\\\notag
&+(2-3m)(A+|x|^2)^2+2mc^2 (1+|x|^2)^2=0,\; \forall\; x\,\bot\,d.
\end{align}
By computing the coefficients of the constant, the $|x|^2$, and the $|x|^4$ terms of this equation we obtain, respectively
\begin{align}\label{D9}
&-2A^2-(m-4)(c^2-|d|^2) =0\\\label{D10}
&\;(m-4)A^2+(m-4)-4|d|^2-4(m-3)c^2=0\\\label{D11}
&-2-(m-4)(c^2-|d|^2)=0.
\end{align}
Equations (\ref{D9}) and (\ref{D11}) implies that $A^2=1$ or $A=1$ since $A=c^2+|d|^2>0$.
Note that if $A=1$, then a straightforward computation shows that at the point where $\langle x, d\rangle=0$,
\begin{align}
\nabla |\bar \nabla \lambda|^2=8c^2|d|^2(A-1)\frac{(1+|x|^2)^3}{(A+|x|^2)^5}\,x=0.
\end{align}
From this and (\ref{GD30}) we have
\begin{align}\label{GD33}
&4|x|^2F^2
+8(1+|x|^2)^2|x-d|^2-2(1+|x|^2)^2F\\\notag
&-12(1+|x|^2)F(|x|^2-\langle x, d\rangle)+2(1+|x|^2)F^2=0.
\end{align}
Computing the constant term of this polynomial equation yields
\begin{align}
8|d|^2-2A+2A^2=0,
\end{align}
which, together with $A=1$, implies that $8|d|^2=0$, and hence $d=0$,  a contradiction.
Combining the results in Case 1 and Case 2 we obtain Statement (i).

By a similar method working with the conformal factor $\lambda=\frac{c(1-|x|^2)}{-c^2+|x-d|^2}$ gives Statement (ii).
\end{proof}
\begin{proposition}\label{SH}
(i) There exists no proper biharmonic  conformal map $\phi: S^m \supseteq U\to H^m$; (ii) There exists no proper biharmonic  conformal map $\phi: H^m \supseteq U\to S^m$
\end{proposition}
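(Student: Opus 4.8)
The plan is to follow the template of Propositions~\ref{HR} and~\ref{SS}. By the unique continuation theorem it suffices to work on an open subset, so for statement (i) I model the domain sphere as $(\r^m,\sigma^2\delta_{ij})$ with $\sigma=\frac{2}{1+|x|^2}$ and the target $H^m$ as $(B^m,\frac{4\delta_{\alpha\beta}}{(1-|y|^2)^2})$, invoke Lemma~\ref{Le1} to write $\phi(x)=b+\frac{kA(x-a)}{|x-a|^\epsilon}$ with $\epsilon\in\{0,2\}$, and test the necessary condition~(\ref{ND2}) of Lemma~\ref{Le2} with $(c_1,c_2)=(1,-1)$; statement (ii) is the same setup with the two models exchanged and $(c_1,c_2)=(-1,1)$.

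First I would record the conformal factor. Since the target is hyperbolic (as in Proposition~\ref{RH}) while the domain sphere contributes the factor $\sigma^{-1}=\frac{1+|x|^2}{2}$ exactly as in the $S^m\to S^m$ computation of Proposition~\ref{SS}, the factor of $\phi:S^m\supseteq U\to H^m$ is
\[
\lambda=\frac{c(1+|x|^2)}{-c^2+|x-d|^2},\qquad c=\frac{k}{1-|b|^2}\neq0,\quad d=a+\frac{kA^tb}{1-|b|^2}.
\]
Using $\bar\nabla=\sigma^{-2}\nabla$ and $\bar\Delta\lambda=\sigma^{-2}\Delta\lambda+(m-2)\sigma^{-3}\langle\nabla\sigma,\nabla\lambda\rangle$, I would compute $\bar\nabla\lambda$, $|\bar\nabla\lambda|^2$, $\bar\Delta\lambda$ and $\nabla|\bar\nabla\lambda|^2$ and substitute them into
\[
(m-4)\nabla|\bar\nabla\lambda|^2+[4\bar\Delta\lambda+(2-3m)\lambda-2m\lambda^3]\nabla\lambda=0.
\]
Because every term decomposes along the two directions $x$ and $d$, this collapses to an identity $P_1(x)x+P_2(x)d=0$ with $P_1,P_2$ polynomials in $x$.

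I would then split into cases exactly as in Proposition~\ref{SS}. If $d=0$, the identity reduces to $P_1(x)\equiv0$, a polynomial identity in $|x|^2$, and comparing the constant and $|x|^2$ coefficients yields a small system in $c$ and $m$ whose only candidate solutions are incompatible with $m\ge3$ and $c\neq0$. If $d\neq0$, I restrict to the hyperplane $\langle x,d\rangle=0$, on which the identity forces $P_2(x)=0$ for all $x\perp d$; comparing the constant, $|x|^2$ and $|x|^4$ coefficients gives a system in $c^2$, $|d|^2$ and $m$ whose only solutions force $m=2$ (where every conformal map is harmonic, so no proper biharmonic map exists) or collapse back to $d=0$, a contradiction. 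Statement (ii) follows by the identical computation with the conformal factor $\lambda=\frac{c(1-|x|^2)}{c^2+|x-d|^2}$, $c=\frac{k}{1+|b|^2}$, and $(c_1,c_2)=(-1,1)$.

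The main obstacle is the weight of the curvature computation: $\nabla|\bar\nabla\lambda|^2$ carries a fifth power of the denominator $-c^2+|x-d|^2$ together with many cross terms in $|x|^2$, $|d|^2$ and $\langle x,d\rangle$, so assembling $P_1,P_2$ and isolating the decisive coefficients is delicate and error-prone, just as in Proposition~\ref{SS}. The genuinely new feature is that in the bracket of~(\ref{ND2}) the two curvature terms $(2-3m)c_1\lambda$ and $2mc_2\lambda^3$ now carry opposite signs, rather than the same sign as in the $S^m\to S^m$ and $H^m\to H^m$ cases; the crux is to verify that this sign mismatch still leaves the extracted coefficient system overdetermined enough to force $m=2$ or $d=0$, so that no proper biharmonic conformal map survives.
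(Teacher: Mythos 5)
Your proposal follows exactly the paper's route: same models for $S^m$ and $H^m$, the same conformal factor $\lambda=\frac{c(1+|x|^2)}{-c^2+|x-d|^2}$, the same necessary condition (\ref{ND2}) with $(c_1,c_2)=(1,-1)$, the same decomposition $P_1(x)x+P_2(x)d=0$, and the same case split ($d=0$ via the constant and $|x|^2$ coefficients; $d\ne0$ via restriction to $x\perp d$ and the constant, $|x|^2$, $|x|^4$ coefficients, forcing $m=2$ or $d=0$). This matches the paper's (itself only sketched) proof of Proposition \ref{SH}, including the reduction of the $A=1$ subcase to the argument of Proposition \ref{SS}, so the approach is correct and essentially identical.
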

\begin{proof}
The proofs are similar to that of Proposition \ref{SS}. Here is a sketch for Statement (i).
 Identifying  the domain sphere $S^m\setminus\{N\}$ with $(\r^m, \sigma^2\delta_{ij})$, $\sigma^2=\frac{4}{(1+|x|^2)^2}$, and the target hyperbolic space with $(\r^m, \frac{4\delta_{\alpha\beta}}{(1-|y|^2)^2})$. Use the local expression of  a conformal maps $\phi:  (\r^m, \frac{4\delta_{ij}}{(1+|x|^2)^2})\supset U\to(\r^m, \frac{4\delta_{\alpha\beta}}{(1-|y|^2)^2})$
 \begin{align}\label{h}
\phi(x)= b+\frac{kA(x-a)}{|x-a|^{\epsilon}},\; a, b\in \r^m,\; A\in O(m), \epsilon\in \{0, 2\}
\end{align}
to compute the conformal factor as
\begin{align}\notag
\lambda=\frac{c(1+|x|^2)}{-c^2+|x-d|^2},\; {\rm where},\; c=\frac{k}{1-|b|^2}\ne 0, \;d=a+\frac{kA^tb}{1-|b|^2}. 
\end{align}
Using (\ref{ND2}) with $c_1=1, c_2=-1$ and straightforward computation to have
\begin{align}\label{SH32}
P_1(x)x+P_2(x)d=0
\end{align}
for any $x$, where $P_1(x)$ and $P_2(x)$ denote two polynomials in $x$.

For $d=0$.  Equation (\ref{SH32}) implies that  $P_1(x)=0$, whose constant term and the $|x|^2$ term lead to
\begin{align}\label{SHCT}
&c^2(c^2+1)[2c^2-(m-4)]=0\\\label{SHC}
&-(c^2+1)\{(m-4)c^4+4(m-3)c^2+m-4\}=0,
\end{align}
which have no solution.

For $d\ne 0$, choose any $x\in \r^m$ with $\langle x, d\rangle=0$, then (\ref{SH32}) implies that $P_2(x)=0, \;\forall\; x\,\bot\, d$.

By computing the coefficients of the constant, the $|x|^2$, and the $|x|^4$ terms of this equation we obtain, respectively
\begin{align}\label{SH4}
&-2A^2+(m-4)(c^2+|d|^2) =0\\\label{SH5}
&(m-4)A^2+m-4-4|d|^2+4(m-3)c^2=0,\\\label{SH6}
&(m-4)(c^2+|d|^2)-2 =0
\end{align}
Equations (\ref{SH4}) and (\ref{SH6}) implies that $A^2=1$ or $A=\pm 1$. A straightforward checking shows that $A=-1$ leads to $m=2$, in which case, we know that there is no proper conformal biharmonic map.

For $A=1$, the argument used in the proof of Proposition \ref{SS} can be carried over verbatim. 
The proof of Statement (ii) is similar and is omitted.
\end{proof}

Summarizing  the classifications in Propositions \ref{RR}, \ref{RS}, \ref{RH}, \ref{HR}, \ref{SS}, and \ref{SH} we obtain Theorem \ref{MT}.

\begin{remark}
It is well known that conformally flat spaces are a generalization of space forms. We would like to point out that our classification theorem for proper biharmonic conformal maps between space forms are not generalized to conformal maps between conformally flat spaces. For example, it was proved in \cite{BO} that  there is an infinite family of conformally flat metrics on $S^4$ such that the conformal map given by the identity map $I: (S^4, g_{can})\to (S^4, \lambda^2 g_{can})$ is a proper biharmonic map from the standard sphere (a non-flat conformally flat space) to a conformally flat sphere.
\end{remark}

\section{Classification of $k$-polyharmonic conformal maps between Euclidean spaces}
 $k$-Polyharmonic maps ( or polyharmonic maps of order $k$) are generalization of both harmonic maps and biharmonic maps. They can be characterized as maps between Riemannian manifolds with vanishing $k$-tension field which, depending on $k=2s$ or $k=2s+1$, are given by
\begin{align}
\tau_{2s}(\phi)=&\bar{\Delta}^{2s-1}\tau(\phi)-R^N(\bar{\Delta}^{2s-2}\tau(\phi), d\phi (e_i))d\phi(e_i)\\\notag
&-\sum_{l=1}^{s-1}\{ R^N(\nabla^{\phi}_{e_i}\bar{\Delta}^{s+l-2}\tau(\phi), \bar{\Delta}^{s-l-1}\tau(\phi))d\phi(e_i)\\\notag
&-R^N(\bar{\Delta}^{s+l-2}\tau(\phi), \nabla^{\phi}_{e_i}\bar{\Delta}^{s-l-1}\tau(\phi))d\phi(e_i)\},\;\;s=1, 2, 3\cdots,\\\notag
\tau_{2s+1}(\phi)=&\bar{\Delta}^{2s}\tau(\phi)-R^N(\bar{\Delta}^{2s-1}\tau(\phi), d\phi (e_i))d\phi(e_i)\\\notag
&-\sum_{l=1}^{s-1}\{ R^N(\nabla^{\phi}_{e_i}\bar{\Delta}^{s+l-1}\tau(\phi), \bar{\Delta}^{s-l-1}\tau(\phi))d\phi(e_i)\\\notag
&-R^N(\bar{\Delta}^{s+l-1}\tau(\phi), \nabla^{\phi}_{e_i}\bar{\Delta}^{s-l-1}\tau(\phi))d\phi(e_i)\},\\\notag
&-R^N(\nabla^{\phi}_{e_i}\bar{\Delta}^{s-1}\tau(\phi), \bar{\Delta}^{s-1}\tau(\phi))d\phi(e_i),\;\;\; s=0, 1, 2, \cdots,
\end{align}
where  $ \bar{\Delta}=-\sum_{i=1}^m(\nabla^{\phi}_{e_i}\nabla^{\phi}_{e_i}-\nabla^{\phi}_{\nabla^M_{e_i}e_i})$ with $ \bar{\Delta}^{-1}=0$, and $\{e_i\}$ is an orthonormal from on $M$. The curvature convention is that for $R^{N(c)}(X, Y)Z=c(\langle Y, Z\rangle X- \langle X, Z\rangle Y)$.
 
 Clearly, $k$-polyharmonic map equations are much more complicated than that of biharmonic maps, so few examples of $k$-polyharmonic maps have been found. For some recent work on $k$-polyharmonic maps see \cite{Wa1, Wa2, Ma1, Ma2, Ma3, Ma4, MR1, MR2, MOR2, NU, Br, Br2, BMOR, BMOR2}.
 
 In this section we give a complete classification of $k$-polyharmonic conformal maps between Euclidean spaces, which provides infinitely many simple examples of proper $k$-polyharmonic maps with nice geometric structure.
 
It follows from $k$-tension field that any harmonic map is a $k$-polyharmonic map for $k\ge 2$ so we will call those $k$-polyharmonic maps which are not harmonic {\em proper $k$-polyharmonic maps}. Note also that in general, a $k$-polyharmonic map for $k\ge 2$ need not be a $(k+1)$-polyharmonic map.

\begin{corollary}
$\phi: (M^m,g)\to \r^n$ is a k-polyharmonic map into Euclidean space if and only if $\Delta^k\phi=0$, where $\Delta$ is the Laplacian on $(M, g)$. In particular, a $k$-polyharmonic map into a Euclidean space is alway a $(k+1)$-polyharmonic map for any $k=1, 2,\cdots$.
\end{corollary}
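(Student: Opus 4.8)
The plan is to exploit the fact that the only place where the geometry of the target enters the displayed $k$-tension field formulas is through the curvature operator $R^N$, which vanishes identically when $N=\r^n$ is Euclidean. First I would substitute $R^N=0$ into the two expressions for $\tau_{2s}(\phi)$ and $\tau_{2s+1}(\phi)$. Every summand except the leading one carries a factor $R^N(\cdots)$, so all of them drop out, and both the even and odd cases collapse to the single identity $\tau_k(\phi)=\bar{\Delta}^{k-1}\tau(\phi)$ valid for every $k\ge 1$, with the convention $\bar{\Delta}^0=\mathrm{id}$. (For $k=2s$ one reads off $k-1=2s-1$, and for $k=2s+1$ one reads off $k-1=2s$, so the two formulas match.)

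Next I would unwind what $\bar{\Delta}$ and $\tau(\phi)$ mean when the target is flat with trivial tangent bundle. Writing $\phi=(\phi^1,\dots,\phi^n)$ in Euclidean coordinates, the pullback bundle $\phi^{-1}T\r^n$ is canonically trivial and the pullback connection $\nabla^{\phi}$ is the componentwise exterior derivative; consequently a section of $\phi^{-1}T\r^n$ is just an $\r^n$-valued function, and the rough Laplacian $\bar{\Delta}$ acts diagonally, coinciding on each component with the scalar Laplacian $\Delta$ of $(M,g)$ up to the fixed sign in its definition. In particular $\tau(\phi)$ has components $\Delta\phi^{\alpha}$. Iterating, $\tau_k(\phi)=\bar{\Delta}^{k-1}\tau(\phi)$ has components $\pm\Delta^{k-1}(\Delta\phi^{\alpha})=\pm\Delta^{k}\phi^{\alpha}$, so $\tau_k(\phi)=0$ if and only if $\Delta^{k}\phi^{\alpha}=0$ for every $\alpha$, that is, if and only if $\Delta^{k}\phi=0$. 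This is exactly the stated characterization.

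For the final clause I would simply note that $\Delta^{k}\phi=0$ forces $\Delta^{k+1}\phi=\Delta(\Delta^{k}\phi)=0$, so a $k$-polyharmonic map into $\r^n$ is automatically $(k+1)$-polyharmonic; this is precisely the implication that the preceding remark warns fails for general targets, and it fails there exactly because the curvature terms we discarded no longer vanish. I do not anticipate a genuine obstacle: the whole argument is a bookkeeping reduction once $R^N=0$ is inserted. The only point demanding care is the identification of $\bar{\Delta}$ on $\phi^{-1}T\r^n$ with the componentwise scalar Laplacian — one must verify that in Euclidean coordinates the pullback connection is flat and trivial so that no extra connection terms survive, and keep track of the sign in $\bar{\Delta}=-\sum_i(\nabla^{\phi}_{e_i}\nabla^{\phi}_{e_i}-\nabla^{\phi}_{\nabla^M_{e_i}e_i})$, which contributes only an overall $\pm1$ irrelevant to vanishing.
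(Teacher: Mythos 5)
Your argument is correct and is exactly the intended one: the paper states this corollary without proof, as an immediate consequence of the displayed $k$-tension field formulas once $R^N=0$ is inserted and $\bar{\Delta}$ is identified (up to sign) with the componentwise Laplacian on the trivially flat pullback bundle $\phi^{-1}T\r^n$. Your handling of the sign convention and of the final ``in particular'' clause via $\Delta^{k+1}\phi=\Delta(\Delta^k\phi)=0$ is also right.
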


\begin{theorem}\label{MT2}
A conformal map   $\phi: \r^m\supseteq U\to \r^m$  is a proper $k$-polyharmonic map if and only if $m=2k$ for $k\ge 2$ and it is a restriction of the M\"obius transformation $\phi: \r^m\setminus\{0\} \to \r^m,\;\phi(x)=b+\frac{cA(x-a)}{|x-a|^2}$. In particular,  there is no  proper $k$-polyharmonic conformal maps (for any $k\ge 2$) between domains of an odd dimensional Euclidean space.
\end{theorem}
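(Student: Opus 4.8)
The plan is to collapse the whole problem onto a single explicit iterated-Laplacian computation for the inversion. First I would invoke the Corollary preceding the theorem: a map into a Euclidean space is $k$-polyharmonic if and only if $\Delta^k\phi=0$, where $\Delta$ is the flat Laplacian on the domain $\r^m$. By the Liouville theorem already used in Proposition~\ref{RR}, any conformal $\phi:\r^m\supseteq U\to\r^m$ with $m\ge 3$ is either a similarity $\phi(x)=b+cA(x-a)$, which is affine and hence harmonic (so never proper), or an inversion-type map $\phi(x)=b+cA(x-a)/|x-a|^2$ with $A\in O(m)$ and $c\ne0$. Since $\Delta$ commutes with the translation $x\mapsto x-a$, annihilates the constant $b$, and the linear map $cA$ is invertible, we have $\Delta^k\phi=0$ if and only if $\Delta^k\psi=0$ for the model inversion $\psi(x)=x/|x|^2$; as all components $x_i/|x|^2$ behave identically, it suffices to analyze one of them.

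The computational heart is a recursion for $g_s:=x_i|x|^{-2s}$. Using the product rule $\Delta(uv)=(\Delta u)v+2\nabla u\cdot\nabla v+u\Delta v$ with $\Delta u=0$ for $u=x_i$, together with $\Delta|x|^{\alpha}=\alpha(\alpha+m-2)|x|^{\alpha-2}$, I would establish
\[
\Delta\!\left(x_i|x|^{-2s}\right)=2s(2s-m)\,x_i|x|^{-2(s+1)},
\]
and then, iterating from $s=1$,
\[
\Delta^{j}\!\left(\frac{x_i}{|x|^{2}}\right)=\Big(\prod_{s=1}^{j}2s(2s-m)\Big)\frac{x_i}{|x|^{2(j+1)}}.
\]
Because every factor $2s$ is nonzero, this expression vanishes exactly when $2s-m=0$ for some $s$ with $1\le s\le j$, i.e. precisely when $m$ is even and $m\le 2j$.

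With the product formula in hand the classification is immediate: the inversion is $k$-polyharmonic iff $m$ is even and $m\le 2k$, and for $m\ge3$ it is never harmonic (the $s=1$ factor $2(2-m)\ne0$). The one conceptually delicate point is the meaning of \emph{proper}: because a $k$-polyharmonic map into a Euclidean space is automatically $(k+1)$-polyharmonic (the Corollary), a sharp dimension count must require order exactly $k$, namely $\Delta^{k}\phi=0$ but $\Delta^{k-1}\phi\ne0$. By the vanishing criterion this forces $m$ even with $2(k-1)<m\le 2k$, hence $m=2k$, and then $\phi$ is the stated inversion-type M\"obius map; conversely $m=2k$ makes the product first vanish at $j=k$, producing a genuine order-$k$ example. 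Finally, when $m$ is odd there is no $s$ with $2s=m$, so the product never vanishes and the inversion is $k$-polyharmonic for no value of $k$; since the only other conformal maps are the harmonic similarities, this gives the ``in particular'' statement ruling out proper $k$-polyharmonic conformal maps on odd-dimensional Euclidean domains. The main obstacle is simply organizing the iterated Laplacian into the clean product above; once the recursion $\Delta(x_i|x|^{-2s})=2s(2s-m)x_i|x|^{-2(s+1)}$ is secured, everything else is bookkeeping and the correct reading of ``proper.''
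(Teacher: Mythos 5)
Your proposal is correct and follows essentially the same route as the paper: Liouville's theorem reduces everything to the inversion-type maps, and your product formula $\Delta^{j}\bigl(x_i|x|^{-2}\bigr)=\bigl(\prod_{s=1}^{j}2s(2s-m)\bigr)x_i|x|^{-2(j+1)}$ is exactly the iterated-Laplacian computation the paper performs (your derivation via the recursion $\Delta(x_i|x|^{-2s})=2s(2s-m)x_i|x|^{-2(s+1)}$ is just a cleaner way to organize it). One remark worth keeping: your reading of ``proper'' as requiring $\Delta^{k}\phi=0$ but $\Delta^{k-1}\phi\ne 0$ is actually \emph{more} careful than the paper's stated definition (``$k$-polyharmonic and not harmonic''), under which the inversion in $\r^{4}$ would count as a proper $k$-polyharmonic map for every $k\ge 2$ and the ``only if $m=2k$'' direction would fail; your interpretation is the one that makes the theorem as stated true.
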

\begin{proof}
For $m=2$, it is well known that any conformal map $\phi: \r^2\supseteq U\to \r^2$ is harmonic. For $m\ge 3$, the Liouville theorem for conformal maps implies that any conformal map $\phi: \r^m\supseteq U\to \r^m$ takes the form $\phi(x)=b+\frac{cA(x-a)}{|x-a|^{\epsilon}}$, where $\epsilon=\{0, 2\}, a, b\in\r^m, A\in O(m)$. For $\epsilon=0$ the conformal map reduces to a homothety which is always a harmonic map. For $\epsilon=2$,  $m\ge 3$, and $i\in \{ 1, 2, \cdots, m\}$ fixed, a straightforward computation yields

\begin{align}\notag
\Delta \left( \frac{x_i-a_i}{|x-a|^2}\right)=&-2(m-2) \frac{x_i-a_i}{|x-a|^4},\\\notag
\Delta^2 \left( \frac{x_i-a_i}{|x-a|^2}\right)=&(-2)(-4)(m-2)(m-4) \frac{x_i-a_i}{|x-a|^6},\\\notag
\cdots,\;\;\cdots,\\\notag
\Delta^k \left( \frac{x_i-a_i}{|x-a|^2}\right)=&(-)^k2\cdot 4\cdots (2k)(m-2)(m-4)\cdots (m-2k) \frac{x_i-a_i}{|x-a|^{2(k+1)}}.
\end{align}
It follows that
\begin{align}\notag
&\Delta^k \phi=\Delta^k \left(b+\frac{cA(x-a)}{|x-a|^2}\right)\\\notag
&=(-1)^k2\cdot 4\cdot 6\cdot \cdots \cdot (2k) (2-m)\cdot (4-m)\cdot (6-m)\cdot \cdots \cdot (2k-m)\frac{cA(x-a)}{|x-a|^{2+2k}},
\end{align}
from which we obtain the theorem.
\end{proof}
As a straightforward consequence of Theorem \ref{MT2}, we have 
\begin{corollary}\label{Co}
For any $k\ge 2$, the inversion in the sphere  $S^{2k-1}$, $\phi:\r^{2k}\setminus\{0\}\to \r^{2k}$, $ \phi(x)=\frac{x}{|x|^2}$ is a proper $k$-polyharmonic map.
\end{corollary}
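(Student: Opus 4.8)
The plan is to obtain the corollary as an immediate specialization of Theorem \ref{MT2}, since the inversion named in the statement is a single distinguished member of the family already classified there. First I would observe that the inversion in the sphere $S^{2k-1}$, namely $\phi(x)=x/|x|^2$ on $\r^{2k}\setminus\{0\}$, is exactly the M\"obius transformation $b+\frac{cA(x-a)}{|x-a|^2}$ that appears in Theorem \ref{MT2}, corresponding to the choices $a=b=0$, $c=1$, and $A=I\in O(2k)$. In particular it is an inversion (the case $\epsilon=2$) rather than a homothety, so it is precisely the type of conformal map for which Theorem \ref{MT2} predicts proper $k$-polyharmonicity.

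Next I would match the dimension. For a fixed $k\ge 2$ the inversion $\phi$ lives on $\r^{2k}\setminus\{0\}$, so the ambient dimension is $m=2k$, which is exactly the divisibility constraint $m=2k$ singled out by Theorem \ref{MT2} as the unique dimension in which a conformal inversion of $\r^m$ can be a proper $k$-polyharmonic map. Since $\phi$ is a conformal map $\r^{2k}\setminus\{0\}\to\r^{2k}$ of precisely the required form and in precisely the required dimension, Theorem \ref{MT2} applies directly and yields that $\phi$ is $k$-polyharmonic and proper.

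The one point worth recording explicitly is properness, i.e.\ that $\phi$ is not harmonic. This is already contained in the computation performed in the proof of Theorem \ref{MT2}: specializing $a=0$ and $m=2k$ there gives
\[
\Delta\phi = -2(m-2)\,\frac{x}{|x|^4} = -2(2k-2)\,\frac{x}{|x|^4}\ne 0\qquad(k\ge 2),
\]
since the factor $m-2$ is nonzero as soon as $m>2$; meanwhile $\Delta^k\phi=0$ because the product $(2-m)(4-m)\cdots(2k-m)$ contains the vanishing factor $2k-m=0$. Thus $\phi$ is $k$-polyharmonic but not harmonic, which is the assertion of the corollary. I do not anticipate any substantive obstacle: the whole content is a direct reading of Theorem \ref{MT2} at one point of the classified family, and the only thing to confirm beyond the quoted theorem is the nonvanishing of $\Delta\phi$, which the displayed formula makes manifest.
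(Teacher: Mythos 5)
Your proposal is correct and matches the paper, which gives no separate proof and presents the corollary as an immediate specialization of Theorem \ref{MT2} to the case $a=b=0$, $c=1$, $A=I$, $m=2k$. Your explicit check that $\Delta\phi=-2(2k-2)x/|x|^4\ne 0$ while $\Delta^k\phi=0$ (via the vanishing factor $2k-m$) is exactly the relevant part of the computation already carried out in the proof of Theorem \ref{MT2}.
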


\begin{remark}
Note that a $k$-polyharmonic map with $k=2$ is nothing but a biharmonic map, so Corollary \ref{Co} includes the  well-known result of Baird-Kamissoko (the $k=2$ case) as a special case.
\end{remark}


\begin{thebibliography}{99}
\bibitem{BK} P. Baird and D. Kamissoko, {\em On constructing biharmonic maps and metrics}, Ann. Global Anal. Geom. 23 (2003), no. 1, 65--75.
\bibitem{BFO} P. Baird, A. Fardoun and S. Ouakkas, {\em Conformal and semi-conformal biharmonic maps}, Ann. Glob. Anal. Geom., Ann. Glob. Anal. Geom., 34 (2008), 403-414.
\bibitem{BO}  P. Baird and  Y. -L. Ou, {\em Biharmonic conformal maps in dimension four and equations of Yamabe-type},  J. Geom. Anal. 28(4) (2018),  3892-3905.
\bibitem{BW} P. Baird and  J. C. Wood, {\em Harmonic morphisms between Riemannian manifolds}, London Math. Soc. Monogr. (N.S.) No. 29, Oxford Univ. Press (2003).
\bibitem{BOn} V. Branding  and C. Oniciuc, {\em Unique continuation theorems for biharmonic maps}, Bull. Lond. Math. Soc., 51(4) ( 2019 ), 603-621.
\bibitem{Br} V.  Branding, {\em The stress-energy tensor for polyharmonic maps}, Nonlinear Anal. 190 (2020), 111616, 17 pp.
\bibitem{Br2} V. Branding, {\em  A structure theorem for polyharmonic maps between Riemannian man-
ifolds}, J. Differential Equations, 273 (2021), 14-39.
\bibitem{BMOR} V. Branding, S. Montaldo, C. Oniciuc, and  A. Ratto {\em Polyharmonic hypersurfaces into pseudo-Riemannian space forms}, 	preprint 2021, arXiv:2106.07888.
\bibitem{BMOR2} V. Branding, S. Montaldo, C. Oniciuc, and A. Ratto, {\em Unique continuation properties for polyharmonic maps between Riemannian manifolds},
preprint 2021, arXiv:2101.01066.
 \bibitem{LO} E. Loubeau and Y. -L. Ou, {\em Biharmonic maps and morphisms from conformal mappings}, Tohoku Math J., 62 (1), (2010), 55-73.
 \bibitem{Ma1} S. Maeta, {\em k-harmonic maps into a Riemannian manifold with constant sectional curvature}, Proc. Amer. Math. Soc. 140 (2012), 1835-1847.
 \bibitem{Ma2} S. Maeta, {\em The second variational formula of the k-energy and k-harmonic curves}, Osaka J. Math. 49(2012), 1035-1063.
 \bibitem{Ma3}  S. Maeta, {\em Construction of triharmonic maps}, Houston J. Math. 41 (2015), 433-444.
 \bibitem{Ma4} S. Maeta, N. Nakauchi, H. Urakawa, {\em Triharmonic isometric immersions into a manifold of non-positively
constant curvature}, Monatsh. Math. 177 (2015), 551-567.
\bibitem{MOR} S. Montaldo, C.  Oniciuc,  and A. Ratto, {\em Rotationally symmetric biharmonic maps between models},  J. Math. Anal. Appl.,  431 (2015), 4494--505.
\bibitem{MOR2} S. Montaldo, C. Oniciuc, and A. Ratto, {\em Polyharmonic hypersurfaces into space forms}, arXiv:1912.10790 (Israel J. Math., in press).
\bibitem{MR1} S. Montaldo and A. Ratto, {\em New examples of r-harmonic immersions into the sphere}, J. Math. Anal. Appl., 458 (2018), 849-859.
 \bibitem{MR2} S. Montaldo and A. Ratto, {\em, Proper r-harmonic submanifolds into ellipsoids and rotation hypersurfaces}, Nonlinear Anal., 172 (2018), 59-72.
\bibitem{NU} N. Nakauchi and H. Urakawa, {\em Polyharmonic maps into the Euclidean space}, Note Mat., 38 (2018),
\bibitem{OC} Y. -L. Ou and B. -Y. Chen, {\em Biharmonic submanifolds and biharmonic maps in Riemannian Geometry}, World Scientific Publishing Co Pte Ltd,  May 2020.
\bibitem{Wa1} S. B. Wang, {\em The first variation formula for k-harmonic mappings}, Journal of Nanchang University 13, N.1 (1989).
\bibitem{Wa2}  S. B. Wang, {\em Some results on stability of 3-harmonic mappings}, Chinese Ann. Math. Ser. A 12 (1991),459-467.
\end{thebibliography}
\end{document}